\documentclass{amsart}

\usepackage{hyperref}
\usepackage{amsmath,color,graphicx,amssymb,amscd}
\usepackage{degt} 
\usepackage{tikz}
\usepackage{tikz-cd}

\usepackage{booktabs}
\usepackage{comment} 
\usepackage[paper=portrait,pagesize]{typearea}
\usepackage{longtable}
\usepackage{array, boldline, makecell}

\newlength\mylength

\def\L{\mathbf{L}}
\def\K{\mathcal{K}}
\def\M{\mathcal{M}}

\def\Z{\mathbb{Z}}
\def\Q{\mathbb{Q}}

\def\discr{\operatorname{discr}}
\def\disc{\operatorname{disc}}

\def\Aut{\operatorname{Aut}}
\def\id{\operatorname{id}}

\def\ie{\emph{i.e\PERIOD}}
\def\eg{\emph{e.g\PERIOD}}
\def\iq.{\emph{i.q\PERIOD}}
\def\cf.{\emph{cf.}}
\def\viz.{\emph{viz\PERIOD}}
\def\vs.{\emph{vs\PERIOD}}

\begin{document}

\title[Real Structures in the Moduli of projective models of $K3$--surfaces]{Real Structures in the Moduli of projective models of K3-surfaces}

\author[\c{C}.~G\"{u}ne\c{s} ~Akta\c{s} ]{\c{C}\.{i}sem G\"{u}ne\c{s} Akta\c{s}}
\address{
 Department of Engineering Sciences, Abdullah G\"{u}l University\\
 38080, Kayseri, Turkey} \email{cisem.gunesaktas@agu.edu.tr}

\thanks{The author was supported by the \textit{T\"{U}B\.{I}TAK} grant $123$F$111$}

\subjclass[2010]{Primary 14J28; Secondary  14J10, 14J17}

\keywords{Projective model, $K3$-surface, complex quartic, singular quartic}

\date{}

\dedicatory{}

\begin{abstract}


This paper investigates when a complex family of $K3$-surfaces with prescribed
simple singularities contains real algebraic surfaces. 
We develop an explicit and uniform algorithm for detecting real representatives in any equisingular stratum, valid for all polarizations.
As a primary
application, we resolve the problem for spatial quartics, showing that
all real equisingular strata contain real surfaces except for three exceptional types, thereby completing the study of real representability in this case. 
Our method also revisits the analogous phenomenon in the realm of plane sextics, recovering the unique known real stratum of simple sextics without real representatives.


\end{abstract}

\maketitle


\section{Introduction}

The geometry of the equisingular strata of curves, surfaces, and
higher-dimensional varieties is one of the central problems of singularity
theory.  
In the case of surfaces of $K3$-type, this problem is particularly rich; thus,
$K3$-surfaces occupy a very special niche in the realm of surfaces: on the one
hand, they are sophisticated enough to pose interesting geometric problems,
and, on the other hand, they provide adequate tools for solving these problems.
It is commonly understood that, by means of the global Torelli theorem~\cite{K3} and the surjectivity of the
period map~\cite{periodmap}, any reasonable question about a $K3$-surface
can be restated in terms of its N\'{e}ron--Severi lattice, and the corresponding
arithmetical problem can be solved by means of Nikulin's \cite{Niku2} theory of
discriminant forms, extended by Miranda and Morrison \cite{MM1,MM2,MM3}.

\medskip
A projective model 
$
X \longrightarrow \mathbb{P}^{n+1}
$
of a $K3$-surface $X$ gives rise to a polarization, \ie, a class
$h\in H_{2}(X)$ of the hyperplane section, satisfying $h^{2}=2n$.
Irreducible smooth rational curves on $X$ are certain classes
$l\in\mathrm{NS}(X)\subset H_{2}(X)$ of square $(-2)$ intersecting $h$
in a prescribed way.  
The simplest case is that of exceptional divisors, $l\cdot h=0$, in which one
studies the equisingular deformation classification of singular models.
The case $h^{2}=0$ (elliptic $K3$-surfaces) was settled by
Shimada~\cite{Shimada.connEllK3}.

The case of plane sextic curves ($h^{2}=2$) was completed in
Akyol and Degtyarev~\cite{Alex2}, after numerous attempts to settle it by conventional, equation-based methods
(see, e.g., Artal et al.~\cite{Artal2001,Artal2007,Artal2002}  or
Oka and Pho~\cite{OkaPo2002,OkaPho2002b}).
That paper, builing  upon earlier work by
Degtyarev~\cite{Alex1,Degtyarev2008StableSym}, Persson~\cite{Persson},
Shimada~\cite{Shimada.Maximizing,Shimada:Zsplitting},
Urabe~\cite{Urabe1988,Urabe1989,Urabe3,Urabe2}, Yang~\cite{Yang.sextics},
gives a complete equisingular classification of simple sextics.

\medskip
The next case of spatial quartics ($h^{2}=4$) was treated classically by
Degtyarev~\cite{Alex94b} in the presence of a non-simple singularity,
\ie, when the quartic is not a $K3$--surface; already then it became apparent
that extending these methods to quartic surfaces with only simple
singularities was hopeless.  
The $K3$-theoretic approach, pioneered by Urabe~\cite{Urabe3,Urabe2} and
Yang~\cite{Yang}, was adopted by G\"{u}ne\c{s}~Akta\c{s}~\cite{Cisem1,aktacs2019real,Cisem2024}, who completed the
classification of simple quartics in the spirit of Akyol and Degtyarev~\cite{Alex2}.

Traditionally, in algebraic geometry one studies either singular (or otherwise enhanced, \eg, with many lines, conics) complex objects (curves, surfaces, \emph{etc.} ) or smooth real ones;  these two tasks are typically of comparable
difficulty.  
Combining the two perspectives is considerably more difficult, and
despite numerous attempts and an extensive literature, only a few sporadic
results are known. For example, speaking about deformation classification problems in the \emph{real} case, the work on quartic spectrahedra by Degtyarev and Itenberg~\cite{AI} and Ottem \emph{et al.}~\cite{Sturmfels} deals with singular \emph{real} quartics. Other special classes of singular real quartic surfaces were studied in Krasnov~\cite{Krasnov2018,Krasnov2019,Krasnov2020a,Krasnov2020b}. Also worth mentioning are other polarizations, namely, sextics in $\mathbb{P}^4$, in Krasnov~\cite{Krasnov2011} and more general $K3$-surfaces with Klein group actions in Degtyarev \emph{et al.}~\cite{Degtyarev:finiteness}. Topologically, the ultimate goal is the classification of singular real curves (surfaces, etc.) up to the so-called rigid isotopy, \ie, equivariant equisingular deformation. Despite its natural
formulation, this problem has not yet been resolved as the known approach needs the fundamental polyhedra of certain hyperbolic groups generated by reflections and, since usually these polyhedra are infinite, their computation via Vinberg \cite{Vinberg1972} gets out of hand. In this paper, we make an attempt to bridge this gap from a slightly different perspective by focusing on the interaction
between singularities and real structures, namely, we address the question: when does a real equisingular stratum in the moduli space of complex singular models
contain a real algebraic variety?
\subsection{Principal results}
We study all models of $K3$-surfaces of a certain fixed \emph{kind} (see 
\autoref{projective.models.of.K3surfaces}).
Denote by $\M$ the space of all models $f\colon X\rightarrow \mathbb{P}^n$; it is divided into equisingular strata $\M(S)$ where $S$ is the set of simple singularities.  Each stratum $\M(S)$  splits further into its connected components corresponding to equisingular deformation classes.

A standard \emph{real structure} (\emph{i.e.}, an antiholomorphic involution) $\operatorname{conj}\colon \mathbb{P}^n\rightarrow\mathbb{P}^n$ induces a real structure $\operatorname{c}: \M\rightarrow\M$ by sending $f\colon X\rightarrow \mathbb{P}^n$ to $\operatorname{conj}\circ f\colon \bar{X}\rightarrow \mathbb{P}^n$. This real structure $\operatorname{c}$ depends on the choice of $\operatorname{conj}$; however, 
the induced action on the set of the connected components of the equisingular strata is well defined. A connected component $\mathcal{D}\subset\mathcal{M}(S)$ is called \textit{real} if $\operatorname{c}(\mathcal{D})=\mathcal{D}$. Clearly, each stratum $\mathcal{M}(S)$ consists of real and pairs of complex conjugate components; this classification of components is given in~\cite{Alex2} for sextics  and in~\cite{Cisem1,Cisem2024} for (nonspecial) quartics.

Although it is quite common that a real variety may have no real points, very few examples of equsingular deformation classes with this property are known. Clearly, any class $\mathcal{D}\subset\M(S)$ containing a real model is real; however, the converse is not true. The fisrt known example of a 
\emph{real} equisingular stratum containing no real curve is the stratum $\M(\mathbf{A}_7\oplus\mathbf{A}_6\oplus\mathbf{A}_5)$ of the space of sextics found in \cite{Alex2}. This phenomenon raises the analogous question for spatial quartic 
surfaces which, until now, remained open except in the 
nonspecial case treated in~\cite{aktacs2019real}.

The present paper provides a complete solution to this problem.
Our principle result, Theorem~\ref{principal.result}, shows that among all equisingular strata of simple quartic surfaces, exactly
three real strata fail to contain real quartic surfaces. This settles the real representability question for simple quartics, extending the earlier results for nonspecial quartics obtained in~\cite{aktacs2019real}.  In particular, we identify a new
exceptional example in the special locus,
$\mathbf{A}_{7}\oplus\mathbf{A}_{5}\oplus \mathbf{A}_{3}\oplus \mathbf{A}_{2}\oplus \mathbf{A}_{1}$, complementing the two
nonspecial examples discovered in~\cite{aktacs2019real}.

\begin{theorem}[Degtyarev, G\"{u}ne\c{s} Akta\c{s}]\label{principal.result}
	With the exception of (one of the strata for each of) the following
	sets of singularities:
	\begin{align*}
	\mathbf{A}_7\oplus\mathbf{A}_6\oplus\mathbf{A}_3\oplus\mathbf{A}_2 \text{(nonspecial)},\\ \mathbf{D}_7\oplus\mathbf{A}_6\oplus\mathbf{A}_3\oplus\mathbf{A}_2\text{(nonspecial)},\\ \mathbf{A}_7\oplus\mathbf{A}_5\oplus\mathbf{A}_3\oplus\mathbf{A}_2\oplus\mathbf{A}_1\text{(special)},
	\end{align*} 
	each real equisingular stratum of simple quartic surfaces
	$X\subset\mathbb{P}^{3}$ has a real representative.
\end{theorem}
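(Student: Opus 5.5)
We reduce the statement to an arithmetical criterion on lattices via the global Torelli theorem and the surjectivity of the period map, and then check that criterion stratum by stratum. An equisingular deformation class $\mathcal{D}\subset\M(S)$ of simple quartics corresponds to an isomorphism class of \emph{abstract homological types}. Such a type is a primitive embedding $\mathcal{S}_h=\mathbf{S}\oplus\Z h\hookrightarrow\mathbf{L}=H_2(X)$, with $\mathbf{S}$ the root lattice spanned by the exceptional divisors and $h^2=4$, satisfying the usual conditions: no extra roots orthogonal to $h$, and no isotropic vector $e$ with $e\cdot h=2$ (the nonhyperelliptic condition). These types are considered up to the action of $\Aut$ and up to orientation of positive definite $3$-planes. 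The class $\mathcal{D}$ is real exactly when the type is isomorphic to its orientation-reversed copy.

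The first key step is to characterize when $\mathcal{D}$ contains a real surface. A real model $X$ carries an antiholomorphic involution whose action $c$ on $\mathbf{L}$ satisfies $c(h)=-h$ and $c(\mathbf{S})=\mathbf{S}$, permuting the exceptional classes up to sign; in short, $-c$ preserves $\mathbf{S}$ as a Dynkin diagram automorphism. Conversely, by the Torelli theorem and the surjectivity of the period map, an involution $c\in O(\mathbf{L})$ with these properties lifts to a real surface in $\mathcal{D}$ provided two conditions hold. The first is that the $(-1)$-eigenlattice of $c$ restricted to $\mathcal{S}_h^\perp$ has positive inertia index exactly $1$ (for the real period), with the corresponding count for the $(+1)$-part. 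The second is that a generic period in the real locus has Néron--Severi lattice exactly $\mathcal{S}_h$; this holds automatically once the real period domain is nonempty.

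Passing to the transcendental lattice $T=\mathcal{S}_h^\perp$, this becomes the following criterion. $\mathcal{D}$ has a real representative if and only if there is an involutive isometry $a$ of $\mathbf{S}$ induced by a symmetry of the Dynkin graph, and an involution $t\in O(T)$, such that three things hold. First, $a\oplus(-1)_h$ and $t$ agree on the discriminant groups $\discr\mathcal{S}_h\cong-\discr T$ via the gluing. Second, $t$ has a $(-1)$-eigenspace of positive index $1$ in $T\otimes\mathbb{R}$, with $T$ of signature $(2,\cdot)$. Third, the glued involution is compatible with the orientation reversal encoding "real". We will state and prove this as a lemma, via Nikulin's gluing.

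The second step applies the criterion to the finite list of real strata from~\cite{Cisem1,Cisem2024} and~\cite{aktacs2019real}. For most strata, $\operatorname{rk}\mathbf{S}\le 17$, and then $T$ is indefinite of rank $\ge 4$. In that case, using the Miranda--Morrison results~\cite{MM1,MM2,MM3}, $O(T)\to\Aut\discr T$ is surjective, so one can take $a=\id$ and find $t$ realizing the required discriminant action, for instance a reflection in a suitable vector of positive square. The same argument disposes of many maximizing-rank cases in which the genus of $T$ has a single class and the surjectivity persists. The remaining cases are those with $\operatorname{rk}\mathbf{S}=19$, where $T$ is positive definite of rank $2$. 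There we enumerate explicitly: $O(T)$ is small (dihedral), and we list the involutions of $T$ with a $1$-dimensional $(-1)$-eigenspace (reflections). For each reflection we compute its image in $\Aut\discr T$ and test whether it extends over $\mathcal{S}_h$ through some graph symmetry of $\mathbf{S}$. This reduces to a finite check, which we carry out by computer for every real stratum, organizing the output into a table.

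The main obstacle is this final rank-$19$ verification. Here the only freedom is the symmetries of the Dynkin graph together with the few reflections of a binary form, and reality of the class does not imply the existence of a suitable reflection. We expect the check to fail exactly for the binary forms underlying $\mathbf{A}_7\oplus\mathbf{A}_6\oplus\mathbf{A}_3\oplus\mathbf{A}_2$, $\mathbf{D}_7\oplus\mathbf{A}_6\oplus\mathbf{A}_3\oplus\mathbf{A}_2$ and the special $\mathbf{A}_7\oplus\mathbf{A}_5\oplus\mathbf{A}_3\oplus\mathbf{A}_2\oplus\mathbf{A}_1$. In those cases the form is real (isomorphic to its conjugate via an improper isometry), but no reflection induces on $\discr T$ an automorphism in the image of the diagram symmetries of $\mathbf{S}$. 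In the special case, the extra care lies in tracking the special (index-$2$ overlattice) gluing of $\mathbf{S}$ with $h$ when computing which discriminant automorphisms are induced.
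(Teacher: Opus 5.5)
\textbf{There is a genuine gap, and it sits exactly where the exceptions live.} Each of the three exceptional sets has total Milnor number $18$ ($7+6+3+2=7+5+3+2+1=18$). So $\operatorname{rk}\tilde S_h=19$, and $T=\tilde S_h^{\perp}$ has rank $3$ and signature $(2,1)$; there is no binary form behind them.

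Your case split uses surjectivity for $\operatorname{rk}\mathbf{S}\le17$ and explicit enumeration for $\operatorname{rk}\mathbf{S}=19$. It never treats $\operatorname{rk}\mathbf{S}=18$. Moreover, the step you single out as the main obstacle cannot fail. When $T$ is positive definite of rank $2$, the restriction to $T$ of any skew-automorphism of the type reverses the orientation of a Euclidean plane. Hence it is a reflection, in particular an involution, and every real maximizing stratum has a real representative (Proposition~\ref{prop:rank2-general}). Your rank-$19$ table would therefore contain no exceptions, and nothing in the plan proves that the three listed strata contain no real quartics.

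\textbf{The missing idea} is a finite test in rank $3$ that is \emph{necessary} as well as sufficient. Let $r$ be an involutive skew-isometry of a lattice of signature $(2,1)$. Both eigenspaces of $r$ have positive index one, so one of them is a line spanned by a positive vector $a$, i.e.\ $r=\pm t_a$ with $a^2>0$. Realizability then becomes the following search: find $a$ with $a^2\mid 2\exp(\discr\tilde S_h)$, an involution $g\in O_h(\tilde S_h)$, and a $(g\oplus-\mathrm{id})$-invariant gluing $(\tilde S_h\oplus\Z a)^{\sim}$ such that
\begin{itemize}
\item both summands are primitive in it;
\item $g\oplus-\mathrm{id}$ acts on its discriminant as $\pm\mathrm{id}$;
\item it embeds primitively into $\L$.
\end{itemize}
This is Theorem~\ref{thm:Aktas2019} together with Corollary~\ref{cor:mu18}. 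Exhausting these finitely many possibilities is how the paper rules out the three strata; see its case analysis over $a^2=2^N3^r$ for $\mathbf{A}_7\oplus\mathbf{A}_5\oplus\mathbf{A}_3\oplus\mathbf{A}_2\oplus\mathbf{A}_1$.

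\textbf{Your treatment of $\operatorname{rk}T\ge4$ is also not justified as stated.} Surjectivity of $O(T)\to\Aut(\discr T)$ is not automatic for indefinite $T$. Nikulin's sufficient condition $\operatorname{rk}T\ge\ell(\discr T)+2$ fails, e.g., for $4\mathbf{D}_4$, where $T=2\mathbf{U}(2)\oplus[-4]$. Even when surjectivity holds, it yields some isometry with the prescribed discriminant action, not an involution whose $(-1)$-eigenspace has positive index one. The paper instead disposes of most strata with $\mu\le18$ by $c$-invariant perturbations of real maximizing types. It handles the rest by the reflection search and treats $4\mathbf{D}_4$ separately.
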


Our approach is based entirely on lattice theory whose tools yield an effective and uniform algorithm for detecting real 
representatives in any equisingular stratum of a simple projective $K3$--model.  
We apply this algorithm to two fundamental families:
simple quartic surfaces ($h^{2}=4$) and simple sextic curves ($h^{2}=2$).
In the quartic case, the algorithm determines a full account of which real strata admit real representatives. In the
sextic case, our method recovers the exceptional example of
Akyol and Degtyarev~\cite{Alex2} and provides a conceptually simpler proof based on
the lattice–-theoretic insight. Our result is obtained by implementing the algorithm decribed in \S\ref{section.algorithm} in \texttt{GAP}~\cite{GAP} as the number of classes (about 12000 ) is far beyond what can be handled by hand .

\subsection{Contents of the paper}
In $\S2$, we recall the necessary background on integral lattices, discriminant forms, and their extensions, following Nikulin’s~\cite{Niku2} foundational work. 
In $\S3$, we introduce projective models of $K3$-surfaces and interpret them in purely lattice-theoretic terms, introducing the lattice types and homological types associated with simple singularities. The section concludes with the arithmetical reduction (Theorem~\ref{def.class}), which translates equisingular deformation classes into oriented homological types. In $\S4$, we discuss real structures on projective models and obtain a criterion for the existence of real representatives in terms of involutive skew–-autoisometries (Theorem~\ref{thm:realization_general}).
In $\S5$, we outline two key tools for detecting such involutions: the perturbation method for propagating reality from maximizing types, and the reflection method searching for real structures arising from (negative) reflections in the transcendental lattice.
In $\S6$, we formulate the main lattice criterion (Theorem~\ref{thm:Aktas2019}) describing when a reflection can be extended to a global skew–-autoisometry of the 
$K3$--lattice, together with a complete characterization in the submaximal case with the total Milnor number $\mu (X)=18$ (Corollary~\ref{cor:mu18}).
In $\S7$, we assemble these ingredients into a complete algorithm for detecting real representatives in any equisingular stratum of projective models of any kind. The algorithm begins with perturbation reduction, and when this fails, proceeds to a systematic search for reflections satisfying the discriminant and embedding conditions required by Theorem~\ref{thm:Aktas2019}. All steps are fully implemented in \texttt{GAP}, and the section explains how they combine to yield definitive conclusions for all lattice types.
In $\S7$, we apply the algorithm to the two fundamental projective models:
simple quartic surfaces, where we prove that all real strata contain real quartics except for three explicitly listed exceptional types;
and simple sextic curves, where we recover the known known real stratum that admits no real representatives via the new lattice-theoretic mechanism.

\subsection{Acknowledgments}
I am grateful to Alexander Degtyarev for a number of comments, suggestions and motivating discussions which significantly contributed to improving the quality of this paper.


\section{Integral lattices}
We recall briefly a few notions and known results concerning integral lattices, their discriminant forms and extensions. The principal reference is \cite{Niku2}.
\subsection{Finite quadratic forms}

A \emph{finite quadratic form} is a finite abelian group $\mathcal{L}$ equipped with a map $q\colon \mathcal{L}\rightarrow\mathbb{Q}/2\mathbb{Z}$ quadratic in the sense that 
$$q(x+y)=q(x)+q(y)+2b(x,y),\quad q(nx)=n^2q(x),\quad x,y\in\mathcal{L},\;n\in \Z,$$ 
where $b\colon \mathcal{L}\otimes\mathcal{L}\rightarrow\mathbb{Q}/\mathbb{Z}$ is a symmetric bilinear form (which is determined by $q$) and $2\colon \mathbb{Q}/\mathbb{Z}\rightarrow\mathbb{Q}/2\mathbb{Z}$ is the natural isomorphism. We abriviate  $x^2:=q(x)$ and $x\cdot y:=b(x,y)$. 

Each finite quadratic form can be decomposed into the orthogonal direct sum $\mathcal{L}=\bigoplus_p\mathcal{L}_{p}$ of its $p$-primary components $\mathcal{L}_{p}:=\mathcal{L}\otimes \mathbb{Z}_p$, where the summation runs over all primes $p$. The \emph{length} $\ell( \mathcal{L})$ is the minimal  number of generators of $\mathcal{L}$; we put $\ell_p(\mathcal{L}):=\ell(\mathcal{L}_{p})$.
A finite quadratic form $\mathcal{L}$ is called \emph{even} if $x^2=0\bmod \Z$ for each element $x\in \mathcal{L}_{2}$ of order $2$; it is called \emph{odd} otherwise.

A finite quadratic form is \emph{nondegenerate} if the homomorphism
\begin{align*}
\mathcal{L}\rightarrow \operatorname{Hom}(\mathcal{L},\mathbb{Q}/\mathbb{Z}),\quad x\mapsto(y\mapsto x\cdot y)
\end{align*}
is an isomorphism. We denote by $\Aut (\mathcal{L})$ the group of automorphisms of $\mathcal{L}$ preserving the form $q$. A subgroup $\K\subset \mathcal{L}$ is called \emph{isotropic} if the restriction of the quadratic form $q$ on $\mathcal{L}$  to $\K$ is identically zero. If this is case $\K^{\bot}/\K$ also inherits from $\mathcal{L}$ a nondegenerate quadratic form.

For a fraction $\frac{m}{n}\in \Q/ 2\Z $, with $(m,n)=1$  such that $mn=0\bmod 2$, we denote by $\langle\frac{m}{n}\rangle$  the nondegenerate finite quadratic form on $\Z/n\Z$ sending the generator to $\frac{m}{n}$, \emph{i.e.} , $\alpha^2=\frac{m}{n} \bmod 2\Z$ for a generator $\alpha$. 
For an integer $k\ge 1$, let $\mathcal{U}_n$ and $\mathcal{V}_n$ be the length $2$ forms on $(\Z/n\Z)^2$, defined by the matrices
\begin{align*}
\mathcal{U}_n:=\left[
\begin{array}{cc}
0 & 1/n \\
1/n & 0 
\end{array}
\right],\quad \mathcal{ V}_n:=\left[
\begin{array}{cc}
2/n&  1/n\\
1/n & 2/n
\end{array}\right], \quad \mbox{where $n=2^k$}.
\end{align*}
Nikulin \cite{Niku2} proved that, a nondegenerate finite quadratic form splits into an orthogonal direct sum of cyclic forms $\langle \frac{m}{n}\rangle$ (defined on the cyclic group  $\Z/n\Z$) and length 2 blocks $\mathcal{U}_n$, $\mathcal{V}_n$. Unless the $2$-torsion  consists of the summands of length 2, we describe nondegenerate finite quadratic forms by expressions of the form $\langle q_1\rangle\ldots \langle q_r\rangle$, where $q_i=\frac{n_i}{m_i}\in \Q$ as above; the group is generated by pairwise orthogonal elements $\alpha_1,\ldots\alpha_n$ (numbered in the order of appearance) so that $\alpha_i^2=\frac{m_i}{n_i} \bmod 2\Z$ and order of $\alpha_i$ is $n_i$.

Given a prime $p$, the \emph{determinant} $\det_p\mathcal{L}$ of a nondegenerate finite quadratic form is the determinant of the matrix of the form on the $p$-group $\mathcal{L}_{p}$ in any minimal basis. Due to \cite{MM3}, one has  $\det_p\mathcal{L}$ has the form  $u/|{\mathcal{L}_{p}}|$, where $u\in\Z_p^{\times}$; the unit $u$ is well defined  modulo $(\Z_p^{\times})^2$ unless $p=2$ and $\mathcal{L}_2$ is odd; in the latter case, $\det_2\mathcal{L}$ is well defined  modulo the subgroup generated by $(\Z_2^{\times})^2$ and $5$.

\subsection{Integral lattices and discriminant forms}
An \emph{(integral) lattice} is a finitely generated free abelian group $L$ equipped with a symmetric bilinear form $b\colon L\otimes L\rightarrow \mathbb{Z}$. Whenever the form is fixed, we use the abbreviation $x^2:=b(x,x)$ and $x\cdot y:=b(x,y)$. A lattice $L$ is called \emph{even} if $x^2 =0\mod 2$ for all $x\in L$; it is called \emph{odd} otherwise. The \emph{determinant} $\det L \in \Z$ is the determinant of the Gram matrix of $b$ in any integral basis of $L$. 
A lattice $L$ is called \emph{unimodular} if $\det L=\pm 1$; it is called \emph{nondegenerate} if $\det L \neq 0$, or equivalently, the \emph{kernel}
\begin{align*}
\operatorname{ker}L=L^{\bot} :=\{x\in L \mid\text{ $x\cdot y= 0$ for all $y\in L$}\}
\end{align*}
is trivial.

The \emph{signature} of a nondegenerate lattice $L$ is the pair $(\sigma_+,\sigma_-)$ of its inertia indices. A nondegenerate lattice is called  \emph{hyperbolic} if $\sigma_+L=1$. 
Given a lattice $L$, the bilinear form on $L$ can be extended by linearity to a $\mathbb{Q}$-valued bilinear form on $L\otimes\mathbb{Q}$. If $L$ is nondegenerate, then we have  canonical inclusion
\begin{align*}
L\subset L^{\vee}:=\operatorname {Hom}(L,\Z)=\{x \in L\otimes\Q \mid \text{$x\cdot y \in \Z$ for all $y \in L$}\}
\end{align*}
The finite quotient group $\discr L :=L^{\vee}/L$ of order $\mathopen|{\det L}\mathclose|$ is called the \emph{discriminant group}  of $L$. In particular, $L$ is unimodular if and only if $\discr L=0$, \emph{i.e.}, $L=L^{\vee}$

The discriminant group inherits from $L\otimes\mathbb{Q}$ a nondegenerate symmetric bilinear form
\begin{align*}
b\colon \discr L\otimes \discr L\rightarrow \mathbb{Q}/\mathbb{Z},\quad (x\bmod L)\otimes(y\bmod L)\mapsto(x\cdot y)\bmod \Z,
\end{align*}
and if $L$ is even, its quadratic extension
\begin{align*}
q\colon \discr L\rightarrow \mathbb{Q}/2\Z, \quad (x \bmod L) \mapsto x^2 \bmod2\Z,
\end{align*}
called, respectively,  the \emph{discriminant bilinear form} and \emph{discriminant quadratic form}. Note that the discriminant group of an even lattice is a finite quadratic form. We use the notation $\discr_p L$ for the $p$-primary part of $\discr L$.
When speaking about the discriminant groups and  their (anti-)isomorphisms, these forms are always taken into account.

Nikulin~\cite{Niku2} states that two nondegenerate even lattices $L',L''$ are in the same \emph{genus} if and only if one has $\operatorname{rk} L'=\operatorname{rk} L''$, $\sigma L'=\sigma L''$ and $\discr L'\cong\discr L''$. Each genus consists of finitely many isomorphism classes(see \cite{Serre.Cours}).

An isometry  $\psi \colon L \rightarrow L'$ between two lattices is a group homomorphism respecting the bilinear forms; obviously one always has $\operatorname{Ker} \psi \subset \operatorname{Ker} L$. The group of autoisometries of a nondegenerate lattice $L$ is denoted by $O(L)$. The action of $O(L)$ extends to $L\otimes\mathbb{Q}$ by linearity, and the latter action descents to  $\discr L$. Therefore, there is a natural homomorphism $O(L)\rightarrow \Aut(\discr L)$ where $\Aut(\discr L)$ denotes the group of automorphisms of $\discr L$ preserving the discriminant form $q$ on $\discr L$. In general this map is neither one-to-one nor onto; however, 
without any confusion we freely apply autoisometries $g\in O(L)$ to objects in $\discr L$. Obviously one has $\Aut(\discr L)=\prod_p \Aut(\discr_p L)$ where the product runs over all primes. The restriction of $d$ to $p$-primary components are denoted by $d_p\colon O(L)\rightarrow \Aut(\discr_p L)$.

Given an integer $d\ge 2$, a $2d$-\emph{polarized lattice} is a nondegenerate hyperbolic lattice $L$ equipped with a distinguished vector $h\in L$ such that $h^2=2d$.
The group of autoisometries of $L$ preserving $h$ is denoted by $O_h(L)$. 

The orthogonal projection establishes a linear isomorphism between any two maximal positive definite  subspaces in $L\otimes\mathbb{R}$, thus providing a way for comparing orientations. A coherent choice of orientations of all maximal positive definite subspaces is called a \textit{positive sign structure} on $L$. We denote by $O^+(L)\subset O(L)$ the subgroup consisting of the autoisometries preserving a positive sign structure. Either one has $O^{+}(L)=O(L)$ or $O(L)^+$ is a subgroup of $O(L)$ of index $2$. In the latter case, each element of $O(L)\smallsetminus O^+(L)$ is called a \emph{skew-autoisometry} of L,\emph{ i.e.}, the autoisometries of $L$ that reverse the positive sign structure.

Of special importance are so called reflections of $L$: given a nonzero vector $a\in L$, the \emph{reflection} defined by $a$ is the automorphism 
\[t_a\colon L\rightarrow L,\quad x\mapsto \frac{2(a\cdot x)}{a^2}a .\]
It is well defined if and only if $(2a/a^2)\in L^\vee$. Note that $t_a$ is an involution. Each image $d_p(t_a)$ is also a reflection and if $a^2=\pm1$ or $a^2=\pm2$, then the induced automorphism $d(t_a)$ of the discriminant group is the identity and $t_a$ extends to any lattice containing $L$.

\subsection{Root Lattices}\label{root.lattices}
A \emph{root} in an even lattice is a vector of square $(-2)$. A \emph{root lattice} is a negative definite lattice generated by its roots. Each root lattice splits uniquely (up to order of summands) into orthogonal sum of irreducible root lattices which are of type $\textbf{A}_n$, $n\geq1$, $\textbf{D}_n$, $n\geq4$, or $\textbf{E}_n$, $n=6,7,8$. For further details on irreducible root systems, see \cite{Bour}.

Given a root lattice $S$, we have $O(S)=R(S)\rtimes\operatorname{Sym}(\Gamma)$, where $R(S)\subset O(S)$ is the group generated by reflections $t_u$, $u\in S$, $u^2=-2$ and $\operatorname{Sym}(\Gamma)$ is the group of  symmetries of the Dynkin graph $\Gamma_S:=\Gamma$. Let $\operatorname{Sym}'(\Gamma)$ be the group of symmetries of $\textbf{E}_8$-type components. Then  one has 
\begin{align*}
\operatorname{Ker}[d\colon O(S)\rightarrow\Aut (\discr S)] = R(S)\rtimes\operatorname{Sym}'(\Gamma). 
\end{align*}
Hence, $d$ admits an isomorphism
\begin{align}\label{Sym0}
    \operatorname{Im}d \cong\operatorname{Sym}_0(\Gamma)\subset\operatorname{Sym}(\Gamma)\subset O(S)
\end{align}
where $\operatorname{Sym}_0(\Gamma)$ is the group of permutations acting identically on the union of $\textbf{E}_8$-type components.
\subsection{Lattice extensions}\label{lattice.extensions}
From now on unless specified otherwise all lattices considered are even and nondegenerate. An extension of an even lattice $S$ is an even lattice $L$ containing $S$. Two extensions $L',L''\supset S$ are called \emph{isomorphic} if there is a bijective isometry $L'\rightarrow L''$ preserving $S$, in particular, if the isomorphism  $L'\rightarrow L''$ is identical on $S$, the extensions $L'$ and $L''$ are called \emph{strictly isomorphic}. More generally, one can also fix a subgroup $G\in O(S)$ and speak about $G$-\emph{isomorphisms} of the extensions, \emph{i.e.}, bijective isometries whose restriction to $S$ is in $G$.

The two extreme cases are \emph{finite index extensions}, \emph{i.e.}, $L$ contains $S$ as a subgroup of finite index and \emph{primitive extensions}, \emph{i.e.}, $L/S$ is torsion free. The general case $L\supset S$ splits into the finite index extension $\tilde{S}\supset S$ and primitive extension $L\supset\tilde{S}$, where 
$$\tilde{S}:=\{x\in L\mid nx\in S\;\mbox{for some $n\in\Z$}\}$$
is the \emph{primitive hull} of $S$ in $L$. 

Any extension $L\supset S$ of finite index admits a unique embedding $L\subset S\otimes \Q$. Since $S$ is nondegenerate, we have $L\subset S^\vee$, and hence the natural inclusions
\begin{align*}
S\subset L\subset L^{\vee}\subset S^{\vee}.
\end{align*}  
The subgroup $\K:=L/S\subset S^{\vee}/S=\discr S$ is called the \emph{kernel} of the finite index extension $L\supset S$. This subgroup $\K$ is \emph{isotropic} (since $L$ is an even integral lattice), \emph{i.e.}, the restriction to $\K$ of the quadratic form $q\colon \discr S\rightarrow \mathbb{Q}/2\Z $ is identically zero. Conversely, if $\K\subset \discr S$ is isotropic, the lattice
$$L:=\{x\in S\otimes \mathbb{Q} \mid x\bmod S\in \K\}$$
is an extension of $S$ and we say that $L$ is the extension of $S$ by $\K$. This can be summarized in the following statement.
\begin{theorem}[Nikulin~\cite{Niku2}]\label{L-K}
	Let $S$ be a nondegenerate even lattice, and fix a subgroup $G\subset
	O(S)$. The map
	\begin{align*}
	(L\supset S)\mapsto \mathcal{K}:=L/S \subset \discr S
	\end{align*} 
	establishes a one-to-one correspondence between the set of
	$G$-isomorphism classes of finite index extensions $L\supset S$
	and the set of $G$-orbits of isotropic subgroups
	$\mathcal{K}\subset \discr S$. Under this correspondence one
	has $\discr L=\mathcal{K}^{\bot}/\mathcal{K}$. Furthermore, an autoisometry of $S$ extends to a finite index extension $L\supset S$ if and only if it preserves $\K$.
	
\end{theorem}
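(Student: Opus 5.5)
The plan is to make the correspondence explicit in both directions and check that the two maps are mutually inverse and compatible with the $G$-action. Since $S$ is nondegenerate, for any finite index extension $L\supset S$ we have $L\subset S\otimes\Q$. For every $x\in L$ some multiple $nx$ lies in $S$, so $x\cdot y\in\Z$ for $y\in S$. This gives the chain $S\subset L\subset L^\vee\subset S^\vee$, and $\K:=L/S$ is a well-defined subgroup of $\discr S$. It is isotropic because $L$ is even: for $x\in L$ one has $q(x\bmod S)=x^2\bmod 2\Z=0$.

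Conversely, given an isotropic subgroup $\K\subset\discr S$, set $L_\K:=\{x\in S^\vee\mid x\bmod S\in\K\}$. First I show that $L_\K$ is integral. By polarization, $2b(x,y)=q(x+y)-q(x)-q(y)=0$ in $\Q/2\Z$, so $b|_\K=0$ in $\Q/\Z$. Evenness of $L_\K$ then follows directly from $q|_\K=0$. The two constructions are visibly inverse, since $L=\{x\in S^\vee\mid x\bmod S\in L/S\}$ and $L_\K/S=\K$.

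For the $G$-equivariance, take $g\in O(S)$. It extends by linearity to $S\otimes\Q$ and preserves $S^\vee$, and it maps $L_\K$ onto $L_{g(\K)}$. Hence an isometry $L'\to L''$ restricting to $g\in G$ on $S$ is the same as an element $g\in G$ with $g(\K')=\K''$. This gives the bijection on $G$-orbits, and taking $L'=L''$ gives the last assertion: $g$ extends to $L$ iff $g(\K)=\K$. Uniqueness of the extension follows because $L\subset S\otimes\Q$.

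The only step requiring some care is the identification $\discr L=\K^\perp/\K$. Here $L^\vee=\{x\in S^\vee\mid x\cdot L\subset\Z\}$, and this equals the preimage of $\K^\perp$ under $S^\vee\to\discr S$. Therefore $L^\vee/L\cong\K^\perp/\K$, and the forms agree because both are computed in $S\otimes\Q$. Nondegeneracy of $\K^\perp/\K$ is automatic, since $\discr L$ is the discriminant of a nondegenerate lattice.
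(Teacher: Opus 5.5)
Your proposal is correct and takes the same route as the argument the paper sketches just before the statement: the embedding $L\subset S\otimes\Q$, the chain $S\subset L\subset L^\vee\subset S^\vee$, isotropy of $\K=L/S$ from evenness of $L$, and recovery of $L$ as the preimage of $\K$ in $S^\vee$; you spell out the integrality, $G$-equivariance and $\discr L=\K^\perp/\K$ steps in more detail than the paper does. One small wording fix: $x\cdot y\in\Z$ for $y\in S$ holds because $L$ is integral and $S\subset L$, while the fact that $nx\in S$ is what gives $L\subset S\otimes\Q$.
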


An extension $L\supset S$ is called \emph{primitive} if $L/S$ is torsion free. Such extensions are studied by fixing (the isomorphism class of) the orthogonal complement $T:=S^{\bot}\in L$. Then $L$ is a finite index extension of $S\oplus T$, in which $T$ is also primitive, and by Theorem \ref{L-K}, it is described by an isotropic subgroup
\begin{equation*}
\K\subset \discr (S\oplus T)=\discr S\oplus \discr T,
\end{equation*}
and the primitivity of $S$ and $T$ in $L$ implies that
\begin{equation*}
\K\cap \discr S= \K\cap \discr T= 0.
\end{equation*}
In other words, $\K$ is the graph of a certain monomorphism $\psi\colon \mathcal{ D}\rightarrow  \discr T$, where $\mathcal{ D}\subset \discr S$.  Since $\K$ is isotropic, $\psi$ is an anti-isometry. 

If $L\supset S\oplus T$ above is unimodular, $\discr L=\mathcal{K}^{\bot}/\mathcal{K}=0$, \emph{i.e.}, $\mathcal{K}^{\bot}=\mathcal{K}$. Then $|\mathcal{K}|^2= |\discr S|\cdot|\discr T|$ which implies that $|\mathcal{K}|= |\discr S|=|\discr S|$  and $\psi$ above is an anti-isomorphism $\discr S\rightarrow\discr T$. Since also $\sigma_{\pm}T=\sigma_{\pm}L-\sigma_{\pm}S$, it follows that the genus $g(T)$ is determined by the genera $g(S)$ and $g(L)$; we will denote this common genus by $g(S^{\bot}_L)$ (We emphasize that $g(S^{\bot}_L)$   may be empty, \emph{cf.} Theorem \ref{th.N.existence} below). If $L$ is also indefinite, it is unique in its genus (see, \emph{e.g.}, Siegel~\cite{SiegelI,SiegelII,SiegelIII}). Thus, we have the following statement (\cf. Nikulin~\cite{Niku2}).
\begin{theorem}\label{bicoset}
	Given  a subgroup $G\subset O(S)$ and unimodular even indefinite lattice $L$, a primitive isometry  $S \into L$ give rise to a bijective isometry $\psi\colon \discr S\rightarrow -\discr S^\bot$ and $G$-isomorphism classes of a primitive isometries  $S\into L$ are in canonical bijection with the following sets of data:
	\begin{enumerate}
		\item an even lattice (isomorphism class) $T\in g(S^{\bot}_L)$, and
		\item a bi-coset in $G\backslash \Aut (\discr T)/O(T)$.
	\end{enumerate}
\end{theorem}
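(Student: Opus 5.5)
We reduce the classification of primitive embeddings $S\into L$ to the finite index description of Theorem~\ref{L-K}, applied to $S\oplus T\subset L$, and then account separately for the choice of $T$ and of the gluing anti-isometry.

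\emph{Step 1: from an embedding to data.} Given a primitive isometry $S\into L$, put $T:=S^\bot_L$. Then $L\supset S\oplus T$ is a finite index extension in which both summands are primitive. By the discussion preceding the statement, its kernel $\K\subset\discr S\oplus\discr T$ is the graph of an anti-isometry $\psi\colon\discr S\to\discr T$. Unimodularity gives $\K^\bot=\K$, so $\psi$ is defined on all of $\discr S$ and is bijective. The rank, signature and discriminant of $T$ are determined, so $T\in g(S^\bot_L)$.

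\emph{Step 2: from data to an embedding.} Conversely, take $T\in g(S^\bot_L)$ and an anti-isometry $\psi\colon\discr S\to\discr T$. The graph $\K$ of $\psi$ is isotropic and satisfies $\K^\bot=\K$. Theorem~\ref{L-K} then yields a unimodular even extension $L'\supset S\oplus T$ in which $S$ and $T$ are primitive, because $\K\cap\discr S=\K\cap\discr T=0$. The lattice $L'$ has the rank and signature of $L$, and it is indefinite and even unimodular. Uniqueness in the genus (Siegel, or Nikulin's criterion for unimodular lattices) gives $L'\cong L$, so we obtain a primitive isometry $S\into L$.

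\emph{Step 3: the equivalence relation.} A $G$-isomorphism between two embeddings is a bijective isometry $\Phi\colon L\to L$ with $\Phi|_S=g\in G$. Such a $\Phi$ carries $T$ onto $T'$, so $T\cong T'$ and the first datum is well defined. Next, fix a representative $T$ and some anti-isometry $\psi_0$; every other anti-isometry then has the form $\alpha\circ\psi_0$ with $\alpha\in\Aut(\discr T)$. A pair $(g,t)\in G\times O(T)$ extends to $L$ if and only if it preserves $\K$ (Theorem~\ref{L-K}), that is, $t\circ\psi=\psi'\circ g$. Hence two embeddings with the same $T$ are $G$-isomorphic exactly when their parameters $\alpha$ are related by left multiplication by $d(O(T))$ and right multiplication by the image of $G$ transported through $\psi_0$. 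This is a double coset in $O(T)\backslash\Aut(\discr T)/G$; after inverting and identifying via $\psi_0$ it matches the notation $G\backslash\Aut(\discr T)/O(T)$ of the statement. Finally, changing $T$ within its isomorphism class, or changing $\psi_0$, only translates the parameter, so the bijection is canonical up to these fixed choices.

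The main obstacle is Step~3, where the bookkeeping has to be exact: we must check that every isometry of $L$ restricting to $G$ on $S$ splits as such a pair $(g,t)$, and that the two one-sided actions are transported correctly through $\psi_0$. Surjectivity in Step~2 rests on the uniqueness of indefinite even unimodular lattices, which we take as known.
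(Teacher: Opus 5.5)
Your proposal is correct and follows the same route as the paper: the kernel of $L\supset S\oplus T$ is the graph of an anti-isometry, unimodularity makes it bijective and fixes the genus of $T$, and uniqueness of the indefinite even unimodular lattice in its genus gives the converse. The double-coset bookkeeping in your Step~3, which the paper leaves to Nikulin, is carried out correctly, including the transport of the $G$-action through $\psi_0$ and the inversion to match $G\backslash\Aut(\discr T)/O(T)$.
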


In particular, the extension $L\supset S$ exists if and only if the genus $g(S^{\bot}_L)$ is nonempty.

From now on, we fix the notation $\L:=3\textbf{U}\oplus2\textbf{E}_8$ where $\textbf{U}$ stands for the \emph{hyperbolic plane}, the lattice generated by a pair of vectors $u,v$ (refered as the \emph{standard basis} of $\textbf{U}$) with $u^2=v^2=0$ and $u\cdot v=1$. Note that $3\textbf{U}\oplus2\textbf{E}_8$ is the unique even unimodular lattice of rank $22$ and signature $(3,19)$. We are concerned about this lattice as it is the intersection index form of a $K3$-surface $X$, \ie, $H_2(X;\Z)\cong \L$.   
We are interested in the primitive embeddings to $\L$. The following theorem giving a criterion for $g(S^{\bot}_\textbf{L})\neq\emptyset$ is a combination of the above observation and Nikulin's existence theorem~\cite{Niku2} applied to the genus $g(T)$.
\begin{theorem}[Nikulin~\cite{Niku2}]\label{th.N.existence}
Given a nondegenerate even lattice~$S$, a primitive extension $\textbf{L}\supset S$ exists
if and only if
the following conditions hold
\begin{enumerate}
\item $\sigma_+ S\leq 3$, $\sigma_-S\leq 19$ and $\ell(\mathcal{S})\leq 22- \operatorname{rk} S$, where $\mathcal{S}=\disc S$;
\item one has  $|{\mathcal{S}}|\det_p (\mathcal{S}) =(-1)^{\sigma_+S-1} \bmod (\Z_p^{\times})^2$ for each  odd prime $p$ such that  $\ell_p(\mathcal{S})= 22- \operatorname{rk} S$;
\item If $\ell_2(\mathcal{S})= 22- \operatorname{rk} S$, and $\mathcal{S}_2$ is even  then $|{\mathcal{S}}|\det_2 (\mathcal{S}) =\pm 1 \bmod (\Z_2^{\times})^2$.
\end{enumerate}
\end{theorem}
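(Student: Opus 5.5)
The plan is to reduce the existence of a primitive embedding $S\hookrightarrow\mathbf{L}$ to the existence of an even lattice $T$ in the genus $g(S^\perp_{\mathbf{L}})$, and then to apply Nikulin's existence theorem for genera to $T$. By the discussion preceding Theorem~\ref{bicoset}, a primitive extension $\mathbf{L}\supset S$ determines $T=S^\perp$ with $\operatorname{rk}T=22-\operatorname{rk}S$, $\sigma_\pm T=\sigma_\pm\mathbf{L}-\sigma_\pm S$ and $\discr T\cong-\discr S$. Conversely, given an even $T$ with these invariants, any anti-isometry $\psi\colon\discr S\to\discr T$ has an isotropic graph $\K$ with $\K^\perp=\K$ (orders match). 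Theorem~\ref{L-K} then produces an even unimodular finite index extension $L\supset S\oplus T$ in which $S$ and $T$ are primitive, since $\K$ meets neither summand. This $L$ has signature $(3,19)$ and is indefinite, so by uniqueness of even unimodular indefinite lattices in their genus $L\cong\mathbf{L}$. Hence existence is equivalent to $g(S^\perp_{\mathbf{L}})\ne\emptyset$, that is, to the existence of an even lattice with signature $(3-\sigma_+S,\,19-\sigma_-S)$ and discriminant form $-\mathcal{S}$.

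Next I would invoke Nikulin's existence criterion: an even lattice of signature $(t_+,t_-)$ with discriminant form $q$ exists if and only if the following hold. First, $t_+-t_-\equiv\operatorname{sign}q\bmod 8$. Second, $t_\pm\ge0$ and $\ell(q)\le t_++t_-$. Third, at each odd $p$ with $\ell_p(q)=t_++t_-$, one has $(-1)^{t_-}|q|\det_p(q)\equiv1\bmod(\Z_p^\times)^2$. Fourth, if $\ell_2(q)=t_++t_-$ and $q_2$ is even (not of the form with a $\langle\pm\frac12\rangle$-type odd part), then $|q|\det_2(q)\equiv\pm1\bmod(\Z_2^\times)^2$.

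I would translate each condition with $q=-\mathcal{S}$. The signature congruence is automatic: $\operatorname{sign}\mathcal{S}\equiv\sigma_+S-\sigma_-S\bmod8$ (van der Blij), so $\operatorname{sign}(-\mathcal{S})\equiv-(\sigma_+S-\sigma_-S)\equiv(3-\sigma_+S)-(19-\sigma_-S)\bmod 8$, because $3-19=-16$. Nonnegativity of the signature is $\sigma_+S\le3$, $\sigma_-S\le19$, and the length bound is $\ell(\mathcal{S})\le22-\operatorname{rk}S$; together these give condition~(1).

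For the determinant conditions, note $\det_p(-\mathcal{S})=(-1)^{\ell_p}\det_p\mathcal{S}$, and in the extremal case $\ell_p=\operatorname{rk}T=22-\operatorname{rk}S$. Also $t_-=19-\sigma_-S$. Combining these, the odd-$p$ condition becomes $|\mathcal{S}|\det_p\mathcal{S}\equiv(-1)^{\operatorname{rk}T+t_-}=(-1)^{t_+}=(-1)^{3-\sigma_+S}=(-1)^{\sigma_+S-1}$, which is condition~(2). For $p=2$ the sign is absorbed by the $\pm1$, giving condition~(3).

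The main obstacle is this sign and normalization bookkeeping: matching the paper's convention for $\det_p$ (as $u/|\mathcal{L}_p|$) with Nikulin's, and confirming that the parity hypothesis in (3) matches Nikulin's condition at $2$. I would check these against a test case such as $S=\mathbf{A}_1$ before finalizing.
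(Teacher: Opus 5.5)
Your proposal is correct and follows the route the paper indicates. The paper justifies the theorem only by citation, as ``the above observation'' ($\mathbf{L}\supset S$ exists iff $g(S^\perp_{\mathbf{L}})\neq\emptyset$) combined with ``Nikulin's existence theorem applied to the genus $g(T)$''. You carry this out explicitly. You first glue $S$ and $T$ along an anti-isometry of discriminant forms (Theorem~\ref{L-K}) and use uniqueness of indefinite even unimodular lattices. You then apply Nikulin's existence criterion to the genus of signature $(3-\sigma_+S,\,19-\sigma_-S)$ with discriminant form $-\mathcal{S}$. Your bookkeeping checks out:
\begin{itemize}
\item the signature congruence is automatic by van der Blij, since $-16\equiv 0 \bmod 8$;
\item in the odd-$p$ condition the sign is $(-1)^{t_-+\operatorname{rk}T}=(-1)^{3-\sigma_+S}=(-1)^{\sigma_+S-1}$, as stated in the theorem;
\item the paper's notion of an even $\mathcal{S}_2$ is exactly Nikulin's requirement that $q_2$ split off no summand $\langle\theta/2\rangle$ (this includes $\langle\pm\frac32\rangle$, not only $\langle\pm\frac12\rangle$).
\end{itemize}
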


\section{K3-Surfaces}\label{projective.models.of.K3surfaces}
In this section, we give a brief account of the theory of K3-surfaces and their projective models; for more details and further references, we address the reader to  \cite{Huybrechts:K3}.

\subsection{Topological structure of K3-Surfaces}\label{lattice.structure.of.K3surfaces}
A $K3$-surface over $\C$ is a simply connected, compact complex surface whose canonical bundle is trivial. All $K3$-surfaces are K\"{a}hler, hence in particular a compact complex surface admitting a rich supply of algebraic models. 
Given a $K3$-surface $X$, one has 
$$  H_2(X;\Z)\cong \mathbf{L}=3\textbf{U}\oplus 2\textbf{E}_8.$$
This lattice $\mathbf{L}$ is unique up to isomorphism and is the only even unimodular lattice of signature $(3,19)$, see \S\ref{lattice.extensions}. For every projective $K3$-surface, the N\'{e}ron-Severi lattice
\[
\mathrm{NS}(X) \;:=\; H^{1,1}(X)\,\cap\, H^{2}(X;\mathbb{Z})
\]
is a primitive sublattice of $\mathbf{L}$ of signature $(1,\rho(X)-1)$, 
where $\rho(X)=\operatorname{rk}\mathrm{NS}(X)\le 20$.
A $K3$-surface $X$ is projective if and only if its N\'{e}ron--Severi lattice 
$\mathrm{NS}(X)$ contains a class $h$ with $h^{2}>0$, 
or equivalently, if $X$ admits an ample line bundle. 
In this case, the intersection form on $\mathrm{NS}(X)$ is hyperbolic.

Even though $K3$-surfaces are those that are given in some $\mathbb{P}^n$ by a system of polynomial equations, these equations almost never enter the picture: by means of such fundamental results as the global Torelli theorem~\cite{K3}, the surjectivity of period map~\cite{periodmap} and the results of Saint-Donat~\cite{Donat}, we identify a $K3$-surface $X$ with its polarized N\'{e}ron-Severi lattice $NS(X)\ni h$ and study the latter by purely arithmetical means, see \S\ref{lattice.extensions} and \S\ref{arithmetical.reduction}.

\subsection{Projective models of K3-Surfaces}\label{projective.models.of.K3surfaces}

Let $X$ be a smooth projective $K3$-surface and 
$h\in\mathrm{NS}(X)$ a primitive ample divisor with $h^{2}=2d>0$. 
The complete linear system $|h|$ defines a morphism
\[
f_{h}: X \longrightarrow \mathbb{P}^{\,d+1}.
\]
This morphism is called a \emph{projective model of degree~$2d$} of the $K3$-surface~$X$ and the class $h$ is referred to as \emph{polarization}. According to \cite{Donat}, such a model can be \emph{birational} (generically one-to-one) or \emph{hyperelliptic} (generically two-to-one) where its image is a normal projective surface of degree $2d$ or $d$, respectively. (If $d = 1$, all models are hyperelliptic, and we study the ramification locus, which is a plane sextic curve.) Up to projective equivalence, the low-degree projective models are given as follows:

\vspace{0.3cm}
	
	\makebox[\textwidth]
	{Birational projective models of $K3$-surfaces}
	\begin{tabular}{ccll}
		\toprule
		$h^{2}=2d$ & Morphism $f_{h}$ & Image $f_{h}(X)\subset\mathbb{P}^{\,d+1}$ & Description\\
		\midrule
		$4$ & $f_{h}:X\to\mathbb{P}^{3}$ & Quartic surface & Spatial model.\\
		$6$ & $f_{h}:X\to\mathbb{P}^{4}$ & Intersection of a quadric and a cubic & Sextic model.\\
		$8$ & $f_{h}:X\to\mathbb{P}^{5}$ & \parbox{6cm}{Octic surface (embedded case: intersection of three quadrics)} & Octic model.\\
	
		\bottomrule
	\end{tabular}

\makebox[\textwidth]{Hyperelliptic projective models of $K3$-surfaces}
\begin{tabular}{ccll}
		\toprule
		$h^{2}=2d$ & Morphism $f_{h}$ & The map $f_{h}(X)\rightarrow\mathbb{P}^{\,d+1}$ & Description\\
		\midrule
		$2$ & $f_{h}:X\to\mathbb{P}^{2}$ & Double cover branched along a sextic curve & Planar model.\\
		$4$ & $f_{h}:X\to Q\subset\mathbb{P}^{3}$ & \parbox{6cm}{Double cover of smooth quadric $Q$  branched along $(4,4)$ curve} &\parbox{2.7cm}{ Hyperelliptic quartic model.}\\
		\bottomrule
\end{tabular}

 Given a projective model $f_h\colon X\rightarrow \mathbb{P}^{\,d+1}$, the intersection lattice $L_X:=H_2(X;\Z)$ is of the form
\begin{equation*}
L_X=H_2(X;\Z)\cong \L.
\end{equation*}
For each simple singular point $p$ of $X$, the components of the exceptional divisor over $p$ span a root lattice in $\mathbf{L}_X$. The orthogonal sum of these sublattices, denoted by $S_X$, is identified with the set of singularities of $X$. Recall that the types of individual singular points are uniquely recovered from $S_X$, see \ref{root.lattices}. In what follows we identify homology and cohomology of $\tilde{X}$ \emph{via} Poicar\'{e} duality and introduce the following vectors and sublattices:
\begin{itemize}
	\item $S_X\subset L_X$: the sublattice generated by the curves contracted by  $f_h$;
	\item $S_{X,h}:=S_X\oplus \mathbb{Z}h_X\subset L_X$ where $h_X=h\in NS(X)$ is the class of the pull-back of a generic plane section of $X$;
	\item $\tilde{S}_X \subset \tilde{S}_{X,h}\subset L_X$: the primitive hulls of $S_X$ and $S_{X,h}$, respectively, \textit{i.e}, $\tilde{S}_X:=(S_X\otimes\mathbb{Q})\cap L_X$ and  $\tilde{S}_{X,h}:=(S_{X,h}\otimes\mathbb{Q})\cap L_X$;
	\item $\omega_X \subset L_X\otimes \mathbb{R}$: the oriented $2$-subspace spanned by the real and imaginary parts of the class of a holomorphic $2$-form on $X$ (the \emph{period} of $X$).
\end{itemize}
Recall that all singularities are \emph{simple}
and, hence, $S_X$ is a \emph{root} lattice, \emph{i.e.}, a negative definite lattice generated by vectors of square $(-2)$ (\emph{roots}). Note that $\omega_X$ is positive definite and orthogonal to $h_X$; in particular $\omega_X\in \tilde{S}_X^\bot\otimes \R$.
The rank $\operatorname{rk}(S_X)$ equals the total Milnor number $\mu(X)$. Since $S_X\subset \mathbf{L}$ is negative definite and $\sigma_-(\mathbf{L})=19$, one has $\mu(X)\leq 19$ (see \cite{Urabe2}, \emph{cf.}, \cite{Persson}). If $\mu(X)=19$, the the projective model is called  \textit{maximizing}. The triple $( h_X\in \tilde{S}_{X,h}\subset L_X)$ is called the \emph{homological type} of the projective model $f_h\colon X\rightarrow \mathbb{P}^{\,d+1}$. 
\subsection{Lattice--theoretic interpretation.}
Each projective model listed in \S\ref{projective.models.of.K3surfaces} corresponds to a 
polarized $K3$-surface $(X,h)$ with primitive ample class $h\in\mathrm{NS}(X)$ satisfying $h^{2}=2d$. 
The lattice $\mathrm{NS}(X)$ carries all numerical data of the embedding 
$f_{h}:X\to\mathbb{P}^{\,d+1}$: it determines both the singularity configuration 
(the sublattice $S\subset\mathrm{NS}(X)$ generated by classes of exceptional divisors) 
and the polarization $h$.  

The distinction between birational and hyperelliptic projective models has a simple 
lattice interpretation: it depends on the existence of isotropic or ``bad'' vectors 
relative to the polarization $h$.  
If $\mathrm{NS}(X)$ contains a class $e$ with
\[
e^{2}=0,\qquad e\!\cdot\!h=2,
\]
then the linear system $|h|$  defines a 
\emph{hyperelliptic} double cover of its image.  
If no such isotropic vector exists, the model is 
\emph{birational}.  
Thus, in the lattice--theoretic classification, hyperelliptic models correspond to 
polarizations $(\mathrm{NS}(X),h)$ containing an isotropic class of square zero, 
while birational models correspond to hyperbolic lattices free of such vectors.

 Thus, we can speak about the kind of a model, specifying its degree and whether it is birational or hyper-elliptic. (In some cases, this notion includes additional information detectable homologically.) The image (of a birational model) or ramification locus (of a hyperelliptic one) may be singular, but the singular points are always simple, \ie, $\mathbf{A}$–-$\mathbf{D}$–-$\mathbf{E}$; a set of simple singularities can be encoded as a negative definite root lattice $S$. Here $S$ represents the root lattice generated by the classes of exceptional curves 
 contracted by the morphism $f_{h}$, 
 and $h$ stands for the polarization corresponding to the projective model of degree~$2d$.
 \begin{definition}\label{lattice.type}
  A \emph{lattice type (extending $S$)} of a given kind of a projective model of degree $2d$ is the $2d$--polarized primitive hull 
 	$\tilde{S}_{h}\ni h$ of $S_h:=S\oplus\mathbb{Z}h$ in $\L$, \ie, $\tilde{S}_{h}:=(S_{h}\otimes\mathbb{Q})\cap \L$ satisfying the following properties:
 	
 	\begin{enumerate}
 		\item each vector $e\in(S\otimes\mathbb{Q})\cap \tilde{S}_{h}$ with $e^2=-2$ and $e\cdot h=0$ lies in $S$, and
 		\item there is no vector $e\in \tilde{S}_{h}$ such that $e^2 =0$ and $e\cdot h=1$.
 		\\\hspace*{\dimexpr\linewidth-\textwidth\relax}If the models in questions are required to be birational, then, in addition,
 		\item  there is no vector $e\in \tilde{S}_{h}$ such that $e^2 =0$ and $e\cdot h=2$,and 
 		\item  if $h^2 = 8$, then $h$ is primitive in $\L$.
 	\end{enumerate}
 	
 \end{definition} 
An \emph{isomorphism} between two lattice types $\tilde{S}'_h,\tilde{S}''_h\supset S_h$ is an isometry  $\tilde{S}'_h \rightarrow \tilde{S}''_h$ preserving both $h$ and $S$ (as a set). We denote by $\Aut_h(\tilde{S}_h)$ the group of \emph{automorphisms} of a lattice type $\tilde{S}_h$, \emph{i.e.} autoisometries of $\tilde{S}_h$ preserving $h$. As $S\subset\tilde{S}_{h}$ is recovered as the sublattice generated by the roots orthogonal to~$h$; then by item (1) in the Definition \ref{lattice.type}, we have  $\Aut_h(\tilde{S}_h)\subset O(S)$.

\begin{definition}\label{abstract.homological.type} A \emph{homological type (of a given kind of a projective model)} extending a lattice type $\tilde{S}_{h}\ni h$ is a primitive isometry 
 $\tilde{S}_{h} \hookrightarrow \L$.
 
\end{definition}  
 
	
 

We denote by the triple $(h \in \tilde{S}_{h} \hookrightarrow \L)$
an  homological type extending the lattice type~$\tilde{S}_{h}$.  
Such a type is said to be \emph{maximizing} if $\operatorname{rk}S=19$ (which is the maximal possible value), \ie, the orthogonal complement 
$\tilde S_{h}^{\perp}$ has rank~$2$. Two homological types $(h_1 \in \tilde{S}'_{h_1} \hookrightarrow \L)$ and $(h_2\in \tilde{S}''_{h_2} \hookrightarrow \L)$ are said to be \emph{isomorphic} if there is an element of the group $O(\L)$ taking $h_1$ to $h_2$ and $S'$ to $S''$ (as a set).

 
 Since $\sigma_+ \tilde S_h^\bot =2$, all positive definite $2$-subspaces in $ \tilde S_h^\bot\oplus \mathbb{R}$ can be oriented in a coherent way. 	An \emph{orientation} of an abstract homological type 
 $(h \in \tilde{S}_{h} \hookrightarrow \L)$
 is a positive sign structure~$\omega$ on $\tilde S_{h}^{\perp}$. An oriented homological type will be denoted by quadruple $(h \in \tilde{S}_{h} \hookrightarrow \L, \omega)$. Let $\omega_1$ and $\omega_2$ be the orientations of  two suchs homological types, then these oriented types are called \emph{ isomorphic} if there is an isomorphism between them taking $\omega_1$ to $\omega_2$, 
 A homological type  $(h \in \tilde{S}_{h} \hookrightarrow \L)$ is called \emph{symmetric} if it is preserved by an element $a\in O_h(\L)\smallsetminus O_h^+(\L)$, \emph{ i.e.} an autoisometry of $\L$ preserving $S$ (as a set) and $h$ and reversing the positive sign structure; such autoisometries are called as \emph{skew-automorphisms} of the homological type. 
 

\subsection{Arithmetical reduction}\label{arithmetical.reduction}

	Let $f_{h,t}\colon X_{t}\to\mathbb{P}^{n+1}$, $t\in[0,1]$, 
	be a continuous family of projective models of $K3$-surfaces of degree $2d$, 
	each defined by the complete linear system $|h_{t}|$.  
	Two projective models $X_{0},X_{1}\subset\mathbb{P}^{n+1}$ 
	are said to be \emph{equisingular deformation equivalent} 
	if there exists such a family $\{X_{t}\}_{t\in[0,1]}$ in which all members 
	have only simple (\ie, $\mathbf{A}$--$\mathbf{D}$--$\mathbf{E}$ type ) singularities and 
	the configuration of singularities remains topologically constant.  
	Equivalently, the total Milnor number
	$
	\mu(X_{t})
	$
	is independent of~$t$, and the adjacency type of each singular point does not change.

In other words, an \emph{equisingular deformation} is a continuous deformation 
within the space of projective models of $K3$-surfaces that preserves the 
combinatorial type of the singular locus.  
The deformation classification of projective models with simple singularities 
is therefore reduced to the classification of their associated abstract homological types.

 Due to Saint-Donat's description~\cite{Donat} of projective models of $K3$-surfaces and the results of Urabe~\cite{Urabe2}, a homological type $( h_X\in \tilde{S}_{X,h}\subset L_X)$ of a projective model  $f_h\colon X\rightarrow \mathbb{P}^{n+1}$ described in \S\ref{projective.models.of.K3surfaces} is an homological type as in the Definition~\ref{abstract.homological.type}; in this case the oriented $2$-subspace $\omega_X$ defines the orientation $\omega$. Hence, the arithmetical  reduction of projective models of $K3$-surfaces can be summarized as follows.
\begin{theorem}[cf.~Theorem~2.3.1 in~\cite{AI}]\label{def.class}
	Equisingular deformation classes of a projective model of $K3$-surfaces of a given kind (with a fixed set of singularities $S$) are in a canonical bijection with oriented isomorphism classes of
	homological types of the same kind. Complex-conjugate strata correspond to isomorphic homological types that differ by orientation.
\end{theorem}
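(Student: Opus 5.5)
We will follow the standard period-space argument of \cite{AI} (Theorem~2.3.1) and \cite{Alex2}, adapting it to projective models of an arbitrary kind. The idea is to build a map from the space $\M(S)$ of models to the set of oriented homological types, and to show that it is locally constant, surjective, and that its fibres are connected.

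First, we define the map. Given a model $f_h\colon X\to\mathbb{P}^{d+1}$ with set of singularities $S$, take the minimal resolution $\tilde X$ and choose a marking $H_2(\tilde X;\Z)\cong\L$. The triple $(h_X\in\tilde S_{X,h}\hookrightarrow L_X)$ together with the orientation of $\omega_X$ then gives an oriented homological type. We check that it satisfies the conditions of Definition~\ref{lattice.type}:
\begin{itemize}
\item condition (1) holds because every root in $\tilde S_X$ orthogonal to $h$ is represented by an effective combination of contracted curves;
\item conditions (2)--(4) are exactly Saint-Donat's criteria \cite{Donat} for $|h|$ to be base-point free and of the prescribed kind (birational or hyperelliptic).
\end{itemize}
Changing the marking changes the type by an element of $O(\L)$, so the isomorphism class is well defined.

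Second, we show the class is constant along equisingular families. In such a family the resolutions form a smooth family, by simultaneous resolution of simple singularities after base change. The lattices $S_{X_t}$ and the classes $h_t$ are then locally constant in $H_2$, and $\omega_{X_t}$ varies continuously in the positive cone of $\tilde S_h^\perp\otimes\R$. Hence the orientation is preserved.

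Third, for the converse, fix an oriented homological type and consider the period domain of positive $2$-planes $\omega\subset\tilde S_h^\perp\otimes\R$ with the given orientation. We remove the planes orthogonal to some vector $e\in\L$ that would violate conditions (1)--(3) when $\mathrm{NS}$ grows. There are countably many such hyperplane-like subsets, each of real codimension $2$, so the complement is connected, because the positive cone of one orientation is connected (\cf. \cite{AI}). By surjectivity of the period map \cite{periodmap}, every $\omega$ in this set is the period of a marked $K3$-surface. After applying reflections to move $h$ into the nef chamber, $h$ is nef with exactly the roots of $S$ orthogonal to it. By \cite{Donat} and \cite{Urabe2}, the linear system $|h|$ then gives a model of the right kind with singularities $S$. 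This proves surjectivity. The global Torelli theorem \cite{K3} shows that models with the same period data are isomorphic, and path-connectedness of the period set together with connectedness of $PGL$ gives connectedness of each fibre.

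Finally, we treat complex conjugation. Replacing $X$ by $\bar X$ keeps $h$ and $S$, but $\omega_{\bar X}$ is spanned by $\operatorname{Re}$ and $-\operatorname{Im}$ of the $2$-form, so the orientation of the same homological type is reversed. This gives the last assertion.

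The main obstacle is the connectedness of the refined period domain. One must control the excluded loci uniformly for all kinds; in particular one must ensure that condition (4), the primitivity of $h$ when $h^2=8$, and the isotropic vectors with $e\cdot h=1,2$ cut out only codimension-$2$ sets rather than walls. We would handle this by observing that each condition concerns vectors $e$ with $e^\perp\cap\tilde S_h^\perp$ negative of codimension one, so each excluded locus is a complex hyperplane section.
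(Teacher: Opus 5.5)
Your proposal is correct and follows essentially the paper's route: the paper gives no independent proof, relying only on Saint-Donat and Urabe for the lattice conditions and on the standard global Torelli/surjectivity-of-periods argument of \cite{AI}, Theorem~2.3.1, which you reconstruct. Two small points need tightening. First, lifting a path in the refined period domain to an actual family of models requires local Torelli (the fine moduli space of marked $K3$-surfaces), not just path-connectedness plus connectedness of $PGL$. Second, under the identification $H_2(\bar X)=H_2(X)$ complex curves change orientation, so $h\mapsto -h$; composing with $-\mathrm{id}$ fixes this and does not affect your conclusion that the orientation of $\omega$ is reversed.
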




\section{Real structures and real homological types}

A \emph{real structure} on a complex $K3$-surface $X$ is an anti-holomorphic involution 
$
c\colon X\longrightarrow X$, $c^{2}=\mathrm{id}.
$
The fixed locus $\mathrm{Fix}(c)$, called the \emph{real part} of $X$, 
is a disjoint union of smooth real curves, possibly empty.  
A pair $(X,c)$ is referred to as a \emph{real $K3$-surface}.  A subvariety $Y\subset X$ is called \emph{real} if $c(Y)=Y$.
If $h\in\mathrm{NS}(X)$ is a polarization invariant under $c^{*}$, the corresponding morphism
$
f_{h}\colon X\longrightarrow\mathbb{P}^{n+1}
$
defined by the complete linear system $|h|$ is a \emph{real projective model}, denoted by the pair $(f_h,c)$.  

Up to isomorphism and up to deformation equivalence, 
there exist precisely two real structures on the complex projective space~$\mathbb{P}^{n}$.  
If $n$ is even, the standard one is given by complex conjugation in homogeneous coordinates,
$$
\conj \colon [z_{0}:\cdots:z_{n}]
\longmapsto [\bar{z}_{0}:\cdots:\bar{z}_{n}],
$$
whose fixed locus is the real projective space $\mathbb{R}\mathbb{P}^{n}$.  
When $n$ is odd, there exists another, non-equivalent real structure 
whose real locus is empty; it can be written, for example, as
$$
c_{\mathrm{alt}}\colon [z_{0}:z_{1}:\cdots:z_{n}]
\longmapsto [-\bar{z}_{1}:\bar{z}_{0}:\cdots:-\bar{z}_{n}:\bar{z}_{n-1}],
$$
as described in~\cite{Degtyarev2008a}.  
These two cases exhaust all real structures on $\mathbb{P}^{n}$ up to deformation.
The involution $c$ acts on the second homology lattice by 
$$
c_{*}\colon H_{2}(X;\mathbb{Z})\longrightarrow H_{2}(X;\mathbb{Z}),\qquad 
(x,y)\longmapsto -\,(c_{*}x,c_{*}y),
$$
reversing the sign of the intersection form; hence $c_{*}$ is a 
\emph{skew-autoisometry} of the homology lattice $H_{2}(X;\mathbb{Z})$.  
Identifying $H_{2}(X;\mathbb{Z})$ with the $K3$-lattice 
$\mathbf{L}=3\mathbf{U}\oplus2\mathbf{E}_{8}$, 
we may view $c_{*}$ as a skew-autoisometry of~$\mathbf{L}$.  
If $c_{*}$ preserves the sublattice $\mathrm{NS}(X)$ and fixes the polarization class $h$, 
we call $c_{*}$ the \emph{real structure on the homological type} of $X$.
 
\begin{definition}
	Let $(h \in \tilde{S}_{h} \hookrightarrow \L,\omega )$ be an oriented homological type, 
	where $\omega$ denotes an orientation of the positive-definite $2$-subspace of 
	$T:=\tilde S_{h}^{\perp}\subset\mathbf{L}$.  
	A \emph{real structure} on an oriented homological type is a skew-autoisometry
	$
	\varphi\colon\mathbf{L}\longrightarrow\mathbf{L}
	$
	such that
	$$
	\varphi(S)=S,\qquad \varphi(h)=h,\qquad 
	\text{and}\quad \varphi|_{T}\text{ reverses the orientation $\omega$ of }T.
    $$
	The quadruple $(h \in \tilde{S}_{h} \hookrightarrow \L, \varphi)$ is called a \emph{real homological type}.
\end{definition}

Every real polarized $K3$-surface $(X,c,h)$ determines a real homological type 
$( h_X\in \tilde{S}_{X,h}\subset L_X,\varphi_{X} )$, 
where$( h_X\in \tilde{S}_{X,h}\subset L_X,\omega_{X} )$ is the oriented homological type 
associated with $f_{h}$ and $\varphi_{X}=c_{*}$ is the induced skew-autoisometry on~$\mathbf{L}$.  
Conversely, the following criterion describes when an abstract homological type is realized by a real projective model and hence gives an arithmetical reduction of the geometric problem of finding real models.

\begin{theorem}[see Theorem 6.1 in \cite{Alex2}]\label{thm:realization_general}
	An oriented homological type  
	is realized by a real projective model of a $K3$-surface 
	if and only if it admits an involutive a skew-autoisometry of~$\mathbf{L}$.
\end{theorem}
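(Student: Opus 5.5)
One implication is routine. Suppose the oriented homological type comes from a real projective model $(f_h,c)$. Then $\varphi:=c_*$ gives the required map once we identify $L_X$ with $\L$. It is an involution because $c^2=\id$. It is a skew-autoisometry because $c$ is antiholomorphic: it acts on $H^{2,0}\oplus H^{0,2}$ by conjugation, so it reverses the orientation of the period plane $\omega_X$ and hence the positive sign structure. Since $c$ maps exceptional curves to exceptional curves, $\varphi$ preserves the set of exceptional divisors and so $\varphi(S)=S$. Finally $\varphi(h)=h$ in the paper's sign convention for $c_*$, because $c$ is the lift of $\conj$ on $\mathbb{P}^{n+1}$ and therefore takes a hyperplane section to a hyperplane section.

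For the converse, let $\varphi$ be an involutive skew-autoisometry preserving $h$ and $S$. The plan is to build a period that $\varphi$ respects and then apply Torelli. Set $T=\tilde S_h^\perp$, which is $\varphi$-invariant. Because $\varphi$ reverses the positive sign structure, $T\otimes\R$ splits into $\pm1$ eigenspaces, each of which contains a positive vector. Choose generic positive vectors $a$ and $b$ in the two eigenspaces and let $\omega=\R a\oplus\R b$, oriented according to the given $\omega$. Genericity is needed for two things. First, $\omega^\perp\cap\L=\tilde S_h$, so that $\mathrm{NS}=\tilde S_h$. Second, no root of $\L$ outside $S$ is orthogonal to both $\omega$ and $h$. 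Both are avoided by staying off countably many hyperplanes inside the eigenspaces, which is possible because each eigenspace contains an open cone of positive vectors. With this choice $\varphi$ maps $\omega$ to itself and reverses its orientation.

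Next we realize the period. Surjectivity of the period map~\cite{periodmap} gives a marked $K3$-surface $X$ with period $\omega$. The polarization $h$ can be made nef, and hence ample modulo $S$, by Weyl reflections; this is where Definition~\ref{lattice.type}(1) is used, ensuring that the roots orthogonal to $h$ in $\mathrm{NS}$ are exactly those of $S$. Conditions (2)--(4) of that definition, together with Saint-Donat~\cite{Donat} and Urabe~\cite{Urabe2}, make $|h|$ a model of the required kind, contracting exactly $S$ (Theorem~\ref{def.class}).

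The main step is to realize $\varphi$ geometrically. The composition $-\varphi$ (i.e.\ $\varphi$ with the sign flip relevant to cohomology) is a Hodge isometry from $H^2(X)$ to $H^2(\bar X)$, and it takes $h$ to $h$. It preserves the nef cone because it fixes $h$ and preserves $S$, so it permutes the simple roots of $S$, possibly after composing with an element of the Weyl group $R(S)$; the hard point is checking that this correction is compatible with involutivity. By the global Torelli theorem~\cite{K3} this isometry is induced by a unique isomorphism $X\to\bar X$, that is, an antiholomorphic map $c\colon X\to X$. Torelli uniqueness (faithfulness of the action of automorphisms on $H^2$ for $K3$ surfaces) then gives $c^2=\id$, since $c^2$ acts as $\varphi^2=\id$. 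Because $c^*h=h$, $c$ descends to a real structure on $\mathbb{P}^{n+1}$ preserving the image. I expect the main obstacle to be this nef-cone step together with the compatibility of the $R(S)$ correction with the involution. For that I would follow~\cite{Alex2}: choose the Weyl chamber of $S$ defined by the exceptional curves and observe that $\varphi$ maps it to a chamber which must be the same one, since $\varphi$ preserves the effective cone determined by the generic $h$.
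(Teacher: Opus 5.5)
The paper gives no argument of its own here, only the citation of \cite{Alex2}. Your outline is the standard argument behind that citation: a period plane spanned by positive vectors of the two eigenspaces of $\varphi|_T$, surjectivity of the period map, then the global Torelli theorem. Most of it is fine, but there is a genuine gap at exactly the step you flag, and your proposed resolution of it is false. You claim that $\varphi$ must map the Weyl chamber $C\subset S\otimes\mathbb{R}$ cut out by the exceptional curves to itself, \emph{since it preserves the effective cone determined by the generic $h$}. Genericity of $\omega$ only gives $\mathrm{NS}(X)=\tilde S_h$. The chambers of the positive cone whose closure contains $h$ are permuted simply transitively by $R(S)$, and each of them can occur as the K\"ahler chamber for the same $h$ and $\omega$. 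The hypothesis of the theorem concerns an arbitrary involutive skew-automorphism, not one adapted to the K\"ahler chamber. For instance, suppose $\varphi$ preserves $C$ and fixes a simple root $e$ (e.g.\ $\varphi|_S=\id$). Then $t_e\varphi$ is again an involution, since the factors commute. It fixes $h$, preserves $S$ and agrees with $\varphi$ on $T$, so it is an involutive skew-automorphism of the same oriented type. Yet it maps $C$ to $t_e(C)\ne C$. For such a map, the isometry you feed into Torelli sends the K\"ahler cone of $X$ to a chamber different from the K\"ahler cone of $\bar X$, and Torelli does not apply.

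The repair is short, and it shows that the compatibility with involutivity you worry about is automatic. Let $w\in R(S)$ be the unique element with $\varphi(C)=w(C)$, and put $\varphi'=w^{-1}\varphi$. Since $\varphi t_r\varphi^{-1}=t_{\varphi(r)}$, $\varphi$ normalizes $R(S)$, hence $(\varphi')^2=w^{-1}(\varphi w^{-1}\varphi^{-1})\in R(S)$. As $(\varphi')^2(C)=C$ and $R(S)$ acts simply transitively on Weyl chambers, $(\varphi')^2$ is trivial on $S$. It is also trivial on $S^\perp$, as reflections in roots of $S$ are, so $(\varphi')^2=\id$. Moreover $\varphi'$ fixes $h$, preserves $S$ and coincides with $\varphi$ on $T$. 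Hence it is still an involutive skew-automorphism of the oriented type and still preserves $\omega$ while reversing its orientation. It now permutes the exceptional classes, so it maps the K\"ahler chamber (the chamber at $h$ on which all $e_i$ are positive) to itself, and Torelli applies to $-\varphi'$.

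Relatedly, fix your signs. An antiholomorphic $c$ reverses the orientation of complex curves, so $c_*h=-h$ and $c_*[E]=-[c(E)]$. The skew-automorphism is therefore $-c_*$: it fixes $h$ and also reverses $\omega_X$, so it reverses the positive sign structure of $\L$, whereas $c_*$ itself preserves it. In the converse, the Hodge isometry $H^2(X)\to H^2(\bar X)$ is $-\varphi'$. It sends $h$ to $-h$ and the K\"ahler cone of $X$ to its negative, which are the polarization and K\"ahler cone of $\bar X$; it does not send $h$ to $h$.
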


\section{Finding real representatives}

By Theorem~\ref{thm:realization_general}, 
a real projective model of a $K3$-surface exists precisely when 
its oriented homological type 
$(h \in \tilde{S}_{h} \hookrightarrow \L, \varphi)$
admits an \emph{involutive skew--isometry} of the lattice~$\L$ preserving 
both $S$ and~$h$ while reversing the orientation of the transcendental part 
$T=\tilde S_{h}^{\perp}$.  
Hence, the problem of finding real representatives reduces to the 
explicit construction of such involutive skew--automorphisms of the homological type.

Note that, a perturbation $S'\subset S$ of root lattices give rise to a perturbation of lattice types $\tilde{S'}_h\subset\tilde{S}_h$, \ie, a primitive sub-lattices. Here the isotropic subgroup $\K$ is inherited automatically. Thus, any  homological type 
$(h \in \tilde{S}_{h} \hookrightarrow \L)$ extending $\tilde{S}_{h}$ gives a canonical homological type
$(h \in \tilde{S}'_{h} \hookrightarrow \L)$ extending $\tilde S'_h$ as one has the chain of primitive extensions $\tilde{S}'_h\subset\tilde{S}_h\subset\L$ and  we call the latter homological type as a \emph{(formal) perturbation} of the former one.

The search for involutive skew--automorphisms depends essentially on 
the complexity of the root lattice~$S$.  
For \emph{maximizing} homological types 
(those with $\operatorname{rk}S=19$), 
the transcendental lattice~$T$ has rank~$2$, and the existence of a 
skew--automorphism can be checked directly from its quadratic form.  
In these cases, the real representative can be found explicitly, often via 
a reflection or a simple composition of reflections in~$T$. For non--maximizing types, one proceeds by two complementary methods:
\begin{enumerate}
	\item[(1)] \textbf{Perturbation method:}  
	Construct the desired type as a perturbation of a maximizing one that is already known to admit a real model.
	\item[(2)] \textbf{Reflection method:}  
	Search directly for a skew--autoisometry of~$\L$ in the form of a $\pm$reflection on the transcendental lattice $T:=\widetilde{S}_{h}^{\perp}$
\end{enumerate}

The perturbation approach is particularly effective because most homological types 
arise as small deformations of maximizing types, obtained by smoothing one or more singularities. A $c$--invariant perturbation of a homological type $\mathcal{H}=(h \in \tilde{S}_{h} \hookrightarrow \L)$ corresponds to a deformation of the 
root lattice $S$ inside $\L$ that remains stable under the induced real structure 
$c_{*}$.  
Since the existence of a real structure is equivalent to the existence of an 
orientation--reversing skew--autoisometry of~$\L$ preserving~$S$ and~$h$, 
this property is preserved under any small, $c$--invariant perturbation.  
Hence, the perturbed homological type also admits a real representative. Thus, we have the following result.

\begin{proposition}\label{real.pert}
	If  an homological type $\mathcal{H}$ is realized by a real birational model $(f_{h},c)$, 
	then any $c$--invariant perturbation $\mathcal{H}'$ of~$\mathcal{H}$ 
	is also realized by a real model.
\end{proposition}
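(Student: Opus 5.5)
The plan is to argue purely lattice-theoretically, using the criterion of Theorem~\ref{thm:realization_general}. Let $(f_h,c)$ be a real model realizing $\mathcal{H}=(h\in\tilde S_h\hookrightarrow\L)$, and let $\varphi:=c_*$. This $\varphi$ is an involutive skew-autoisometry of $\L$ with $\varphi(h)=h$ and $\varphi(S)=S$. The perturbation $\mathcal{H}'$ is given by a root sublattice $S'\subset S$; by definition, $c$-invariance means $\varphi(S')=S'$ as a set. Then $\tilde S'_h=(S'\oplus\Z h)\otimes\Q\cap\L$ is also $\varphi$-invariant, since $\varphi$ is defined over $\Z$ and preserves $S'\oplus\Z h$. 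Consequently the transcendental lattice $T':=(\tilde S'_h)^\perp\supset T$ is $\varphi$-invariant as well.

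The key step is to verify that $\varphi$ reverses the orientation of $T'$. The positive inertia index satisfies $\sigma_+(\tilde S'_h)=1$, spanned by $h$, which $\varphi$ fixes. So $\varphi$ preserves the positive sign structure on $\Z h\otimes\R$. Because $\varphi$ is a skew-autoisometry of $\L$, whose positive part is $\R h\oplus(\text{positive }2\text{-plane of }T')$, it must therefore reverse the positive sign structure on $T'$. This is the same argument that shows $\varphi|_T$ reverses $\omega$. Compatibility of orientations follows because $T\subset T'$ and $\sigma_+T=\sigma_+T'=2$: a positive definite $2$-plane of $T\otimes\R$ is also maximal positive in $T'\otimes\R$. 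Hence the orientation $\omega'$ induced on $\mathcal{H}'$ by $\omega$ is reversed. Thus $\varphi$ is an involutive skew-automorphism of the oriented homological type $(h\in\tilde S'_h\hookrightarrow\L,\omega')$.

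It remains to check that $\tilde S'_h$ is a lattice type of the same kind, so that Theorem~\ref{thm:realization_general} applies. This is the main obstacle. Conditions (2)--(4) of Definition~\ref{lattice.type} pass to the sublattice $\tilde S'_h\subset\tilde S_h$ automatically. Condition (1) requires that all roots of $(S'\otimes\Q)\cap\tilde S'_h$ orthogonal to $h$ lie in $S'$. I would take this as part of what ``perturbation'' means, following Urabe: an admissible perturbation is an induced subgraph of the Dynkin graph, which forces this condition. Alternatively, one can invoke the standard fact that a geometric perturbation of a model yields such an $S'$. With this in place, Theorem~\ref{thm:realization_general} produces a real model realizing $\mathcal{H}'$.
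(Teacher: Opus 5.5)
Your proposal is correct and follows the same route as the paper. The paper's sketched argument (the paragraph preceding the proposition) is precisely that $c_*$, which preserves $S'$ and $h$, remains an involutive skew-automorphism of the perturbed type, so Theorem~\ref{thm:realization_general} applies. You supply the details the paper leaves implicit: invariance of $\tilde S'_h$ and of $T'\supset T$, orientation reversal on $T'$, and the lattice-type conditions.

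Condition (1) is less of an obstacle than you suggest. A root sublattice $S'$ spanned by an induced subgraph is primitive in $S$. Hence a root of $(S'\otimes\Q)\cap\tilde S'_h$ lies in $S$ by condition (1) for $\tilde S_h$, and therefore in $S\cap(S'\otimes\Q)=S'$.
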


\begin{remark}
	In the computations presented in Chapter~\ref{Applications}, 
	Proposition~\ref{real.pert} is applied to birational projective models.  
	For hyperelliptic models, an analogous but stronger statement 
	can be found in~\cite{Alex2}, where the equivariance of the double covering 
	is used to extend the argument to real hyperelliptic cases.
\end{remark}

Before analyzing the role of reflections on a case by case basis in terms of $\operatorname{rk}\,T$, we recall a simple 
lattice--theoretic criterion that guarantees the existence of a 
real model in which all exceptional divisors are real.  
Remarkably, this condition is also necessary.

\begin{proposition}\label{real.lattice.condition}
	Let $(h \in \tilde{S}_{h} \hookrightarrow \L)$ be an  homological type, 
	and let $T:=\tilde S_{h}^{\perp}$ be its transcendental lattice.  
	If $T$ contains a sublattice isomorphic to $[2]$ or to $\mathbf{U}(2)$, 
	then homological type  is realized by a real model.  
	Conversely, if $(h \in \tilde{S}_{h} \hookrightarrow \L)$ admits a real model in which all exceptional divisors are real, 
	then $T$ necessarily contains a sublattice isomorphic to $[2]$ or $\mathbf{U}(2)$.
\end{proposition}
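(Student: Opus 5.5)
Both directions reduce, via Theorem~\ref{thm:realization_general}, to constructing or analysing an involutive skew-autoisometry $\varphi$ of $\L$ with $\varphi(h)=h$ and $\varphi(S)=S$. In the first direction we take $\varphi|_{\tilde S_h}=\id$ and let $\varphi|_T$ be an involution of $T$ reversing the positive sign structure, chosen to act identically on $\discr T$. Gluing with the identity on $\tilde S_h$ is then compatible with the anti-isometry $\discr\tilde S_h\to\discr T$ defining $\L\supset\tilde S_h\oplus T$, so by Theorem~\ref{L-K} the isometry $\id\oplus\varphi|_T$ extends to an involution of $\L$. Since $\tilde S_h$ contains the positive vector $h$, which is fixed, and $T$ carries the remaining two positive directions, $\varphi$ is a skew-autoisometry exactly when $\varphi|_T$ reverses the orientation of the positive $2$-planes in $T\otimes\R$. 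Such a $\varphi$ also reverses $\omega$, so it is a real structure on the homological type.

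For the constructions:
\begin{itemize}
\item If $a\in T$ with $a^2=2$, take $\varphi|_T:=-t_a$. The reflection $t_a$ is integral, since $2a/a^2=a\in T^\vee$, and acts trivially on $\discr T$ because $a^2=\pm2$. Hence $-t_a$ acts as $-\id$ on $\discr T$, which is not quite what we want. The remedy is to take instead $\varphi=t_a$ on the whole $\L$: it extends to $\L$ because $a^2=2$, it fixes $\tilde S_h\perp a$ pointwise, and it reverses exactly one positive direction. This makes it a skew-autoisometry, and it is involutive.
\item If $T\supset\mathbf{U}(2)$ spanned by $u,v$ with $u^2=v^2=0$ and $u\cdot v=2$, consider $e=u+v$ with $e^2=4$, or better the involution swapping $u$ and $v$ composed with $-\id$ on the span. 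I would realize it as the product of reflections in $u\pm v$, whose squares are $\pm4$. I would check integrality on $\L$ directly: the map $x\mapsto x-(x\cdot(u-v))(u-v)/2$ requires $x\cdot(u-v)$ to be even for all $x\in\L$, and this parity condition is the main obstacle. I expect to handle it by using that $\mathbf{U}(2)\subset\L$ gives an element $(u\pm v)/2\in\L^\vee\otimes\Q$ pairing appropriately. Alternatively, write $\L\supset\mathbf{U}(2)\oplus\mathbf{U}(2)^\perp$ and define $\varphi$ as $-(\text{swap})$ on $\mathbf{U}(2)$ and $\id$ on the complement. Since swap acts on $\discr\mathbf{U}(2)\cong\mathcal U_2$ by swapping generators and $-1=1$ on a $2$-group, the discriminant action is the swap, which is nontrivial. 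So one instead uses the $\mathbf{U}$-type reflection $x\mapsto -x$ composed appropriately, and I would verify by direct computation on $\discr\mathbf{U}(2)=(\Z/2)^2$ which of the involutions $\pm\id$, $\pm\text{swap}$ acts trivially and reverses exactly one positive direction. The candidate is $\pm\text{swap}$ with sign chosen so the $(+)$ eigenvector $u+v$ is negated.
\end{itemize}

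Conversely, suppose all exceptional divisors are real. Then $c_*$ acts on $S$ as $-\id$ on each root, because a real curve $E$ has $c_*[E]=-[E]$ under the sign convention. Also $c_*h=h$ after sign normalization, so $c_*$ is $\pm\id$ on $\tilde S_h$, and by Theorem~\ref{L-K} and unimodularity $c_*|_T$ acts on $\discr T$ as $\pm\id$ in the matching sense. Consider the eigenlattices $T_\pm$ of the involution $\psi=c_*|_T$ with $\psi\equiv\id$ on $\discr T$. The standard fact is that $T/(T_+\oplus T_-)$ is $2$-elementary and that $\psi\equiv\id$ on $\discr$ forces $T_-$ to be a $2$-elementary-scaled lattice, $T_-=N(2)$ for an even-or-odd integral $N$. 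Since $\psi$ reverses orientation, $T_-$ contains a positive vector, so $N$ contains a positive $x$. Reducing to a minimal such configuration gives either $x^2=1$ in $N$, hence $[2]\subset T$, or $N\supset\mathbf U$, hence $\mathbf U(2)\subset T$. This classification of the possible $N$ of signature $(1,*)$ is the step I would check most carefully.
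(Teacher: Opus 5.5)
The paper only recalls this proposition and gives no proof, so your argument has to stand on its own. Your framework is fine: $\varphi=\id$ on $\tilde S_h$, and on $T$ an involution that acts trivially on $\discr T$ and reverses the sign structure. The $[2]$ case, with $\varphi=t_a$ on all of $\L$, is also fine.

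The $\mathbf U(2)$ case, however, ends on the wrong map. The candidate you settle on is $-\mathrm{swap}$ on $\langle u,v\rangle$, extended by $\id$. This is exactly the reflection $t_{u+v}$, and it is never an isometry of $\L$. Integrality would require $(u+v)/2\in\L^\vee=\L$, but that vector has square $1$; your own computation of the swap action on $\discr\mathbf U(2)$ already pointed to this. This matters: in the paper's $4\mathbf D_4$ example, $T=2\mathbf U(2)\oplus[-4]$ admits no real structure of the form $\pm t_a$, yet the proposition is invoked there.

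The correct involution is $-\id$ on $\mathbf U(2)$ and $\id$ on its orthogonal complement. This is precisely the product $t_{u+v}t_{u-v}$ you wrote down but misidentified: on $\langle u,v\rangle$, $t_{u-v}$ acts as the swap and $t_{u+v}$ as $-\mathrm{swap}$. Explicitly, $\varphi(x)=x-(x\cdot v)u-(x\cdot u)v$. It is integral on all of $\L$ with no parity or primitivity hypothesis, it is involutive, and it is the identity on $\tilde S_h$. It negates one positive direction ($u+v$) and one negative one ($u-v$), so it is a skew-autoisometry. The parity obstruction for the individual factors is therefore irrelevant.

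In the converse, the claim that $T_-=N(2)$ with $N$ integral is false in general. Take $T=\mathbf U\oplus T'$ and $\psi=-\id_{\mathbf U}\oplus\id_{T'}$: it acts trivially on $\discr T$, reverses the sign structure, and has $T_-=\mathbf U$. You should also fix the signs: $c_*$ negates both $[E]$ and $h$, so it is $\varphi=-c_*$ that is the identity on $S\oplus\Z h$, hence on $\tilde S_h$.

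What is true is the following.
\begin{itemize}
\item The $(-1)$-eigenlattice $L_-$ of $\varphi$ lies in $T$.
\item Since $\L$ is unimodular, $x-\varphi(x)=2\operatorname{pr}_{L_-}x$ runs over $2L_-^\vee$, so $L_-$ is even and $2$-elementary.
\item $\sigma_+L_-=1$, because $\varphi$ fixes $h$ and reverses the sign structure.
\end{itemize}

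From there, ``reducing to a minimal configuration'' proves nothing. An integral lattice with $\sigma_+=1$ need not contain $[1]$ or $\mathbf U$ (take $[3]$). Your dichotomy only works when $L_-=N(2)$ with $N$ unimodular, that is, when $\ell(\discr L_-)=\operatorname{rk}L_-$.

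The missing ingredient is Nikulin's classification of even $2$-elementary lattices. In rank $1$ one has $L_-=[2]$. Otherwise $L_-$ is indefinite, determined by $\operatorname{rk}L_-$, $a=\ell(\discr L_-)$ and the parity $\delta$ of its discriminant form, and it splits off $[2]$, $\mathbf U$ or $\mathbf U(2)$. Since $\mathbf U=\langle e,f\rangle$ contains $\langle e+f\rangle\cong[2]$, this gives $[2]$ or $\mathbf U(2)$ inside $L_-\subset T$.
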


When perturbations are not available, one searches directly for 
skew--automorphisms realized by reflections against the vectors of positive square 
in the transcendental lattice~$T$: For a non–isotropic vector $a\in T$ with $a^{2}\neq0$,
the \emph{reflection} against $a$ is the isometry
\[
t_{a}\colon x\longmapsto x-\frac{2(x\cdot a)}{a^{2}}\,a,
\qquad x\in T.
\]
It satisfies $t_{a}^{2}=\mathrm{id}$.
It is well defined if and only if 
\begin{align}\label{welldefinedt_a}
(2a/a^2)\in L^\vee
\end{align}
If $a^{2}>0$, then $t_{a}$ preserves the intersection form
and reverses the orientation of any positive definite subspace of $T$.
Hence the map $-t_{a}$ acts as a skew–-autoisometry of $T$. Compositions of such reflections generate a large subgroup of $O(T)$,  
and provide the natural candidates for real structures. Verifying whether this reflections extends to a skew--autoisometry of~$\L$ 
reduces to checking Nikulin’s extension conditions, see Theorem~\ref{L-K}.
Following Nikulin~\cite{Niku2}, we state the following lemma.

\begin{lemma}\label{lem:extension}
	A reflection $t_{a}\in O(T)$ extends to a skew–autoisometry of $\L$
	if and only if the induced automorphism of $\operatorname{disc}T$
	acts trivially.
	Equivalently, $t_{a}$ must satisfy
	$$
	t_{a}\equiv \mathrm{id} \mod T^{\vee}/T.
	$$
\end{lemma}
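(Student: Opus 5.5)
The plan is to read the lemma as saying that $t_a$ extends to $\L$ \emph{by the identity on the complement} $\tilde S_h=T^\perp$, and to derive it from Theorem~\ref{L-K} applied to the finite index extension $\L\supset\tilde S_h\oplus T$. This is the extension one wants, since it automatically fixes $h$ and $S$. Without such a normalization the ``only if'' direction fails: an arbitrary extension could act nontrivially on $\tilde S_h$ and compensate on the discriminant.

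\emph{Step 1: gluing.} Since $\L$ is unimodular and both $\tilde S_h$ and $T$ are primitive, the discussion preceding Theorem~\ref{bicoset} shows that the kernel $\K\subset\discr\tilde S_h\oplus\discr T$ of $\L\supset\tilde S_h\oplus T$ is the graph of an anti-isometry
\[
\psi\colon\discr\tilde S_h\to\discr T .
\]
By the last assertion of Theorem~\ref{L-K}, an isometry $g_S\oplus g_T$ of $\tilde S_h\oplus T$ extends to $\L$ if and only if it preserves $\K$. Preserving the graph of $\psi$ is the same as the compatibility
\[
d(g_T)\circ\psi=\psi\circ d(g_S).
\]

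\emph{Step 2: the equivalence.} Take $g_S=\id_{\tilde S_h}$ and $g_T=t_a$. Note that $t_a\in O(T)$ is assumed, so condition~\eqref{welldefinedt_a} holds. Because $\psi$ is an isomorphism onto $\discr T$, the compatibility condition becomes $d(t_a)=\id$ on $\discr T=T^\vee/T$. This proves both directions simultaneously. The extension $\id\oplus t_a$ is unique, since $\L\subset(\tilde S_h\oplus T)\otimes\Q$. It is involutive because $t_a^2=\id$, and it visibly fixes $h$ and preserves $S$.

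\emph{Step 3: skewness.} This is the only step needing care. We have $\sigma_+\tilde S_h=1$ (spanned by $h$) and $\sigma_+T=2$, so a maximal positive definite subspace of $\L\otimes\R$ can be taken as $\R h\oplus P$ with $P\subset T\otimes\R$ positive definite of dimension $2$. The map $\id\oplus t_a$ fixes $h$. Since $a^2>0$, the reflection $t_a$ acts on $P$, after orthogonal projection, with determinant $-1$. Hence the positive sign structure of $\L$ is reversed, which makes the extension a skew-autoisometry. At the same time it reverses the orientation $\omega$ of $T$, as required for a real structure on the homological type.

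The main obstacle I expect is reconciling this with the text's phrasing that ``$-t_a$'' is the skew map. If one instead wants to extend $-t_a$, the same argument applies with $g_S=-\id$. The condition then becomes $d(-t_a)=\psi\circ(-\id)\circ\psi^{-1}=-\id$, which is again $d(t_a)=\id$. However, $-\id$ on $\tilde S_h$ does not fix $h$. So I would state and prove the lemma with $g_S=\id$, and remark that the sign convention only affects which map is labelled skew, not the discriminant condition.
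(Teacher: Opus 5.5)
Your proof is correct and follows the paper's route (Nikulin's gluing criterion for $\L\supset\tilde S_h\oplus T$), but more carefully. The normalization $\Phi|_{\tilde S_h}=\id$ is exactly what the paper's one-line citation tacitly needs for the ``only if'' direction, and your Step~3 supplies the skewness check that the paper omits.

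Your closing remark is slightly off, though. Since $\sigma_+T=2$, the map $\id_{\tilde S_h}\oplus(-t_a)$ also fixes $h$ and is skew, but it extends if and only if $d(t_a)=-\id$. So the choice of sign changes the discriminant condition, not which map is skew.
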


\begin{proof}
	The statement follows directly from Nikulin’s extension criterion:
	an isometry of a primitive sublattice extends to the ambient unimodular lattice
	iff it induces the identity on the discriminant form.
	Since $\L$ is unimodular and $T$ is primitive in $\L$,
	the claim follows.
\end{proof}

A vector $a\in T$ with $a^{2}>0$ is called a \emph{real vector}
if the reflection $t_{a}$ gives rise to an orientation–-reversing
isometry of $T$ and extends to a skew–-autoisometry of $\L$.

Reflections satisfying Lemma~\ref{lem:extension} are precisely
those that can be promoted to global skew–-autoisometries of $\L$,
hence to real structures on the corresponding projective models.


\subsection{Reflections and the rank of the transcendental lattice}\label{ref.vs.rkT}

Fix a homological type $(h \in \tilde{S}_{h} \hookrightarrow \L)$, $T:=S_{h}^{\perp}$.
The problem of whether the homological type is realized by a real projective model of a $K3$-surface
is equivalent to the existence of an involutive skew--autoisometry of~$\L$ in the form of a $\pm$reflection on the transcendental lattice $T$.
The structure of such involutions depends essentially on the rank of $T$.
It is therefore natural to analyse the problem according to  $\mathrm{rk}\,T$. 
We discuss the possibilities on a case by case basis, indicating when an skew–-autoisometry of $T$
can be represented by a reflection.  We postpone the explicit construction of reflections to Section \ref{section.reflection}.

\subsubsection*{The case $\operatorname{rk}T=2$}

Then the lattice $T$ is positive definite.
The isometry group $O(T)$ is finite, by classical arithmetic of binary quadratic forms
(Gauss~\cite{Gauss}).  
Any orientation–reversing isometry of $T$ fixes a line in $T\otimes\mathbb R$,
hence is a reflection across that line.
Thus every skew–-autoisometry of $T$ is a reflection, and
any symmetric homological type of maximal rank (so that $\mathrm{rk}\,T=2$)
admits a real model.

\begin{proposition}\label{prop:rank2-general}
	Any  symmetric maximizing homological type admits an involutive skew--automorphism, and consequently any real connected component of the stratum $\mathcal M(S)$ contains a real projective model.
\end{proposition}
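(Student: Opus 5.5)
Start from a symmetric maximizing homological type $(h\in\tilde S_h\hookrightarrow\L)$. By definition it has a skew-automorphism $a\in O_h(\L)\smallsetminus O_h^+(\L)$ preserving $S$ and $h$. The goal is to replace $a$ by an \emph{involutive} skew-automorphism and then invoke Theorem~\ref{thm:realization_general}.

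First I would compare the two decompositions. Since $\operatorname{rk}S=19$, the complement $T:=\tilde S_h^\perp$ has rank $2$ and is positive definite, because $\sigma_+\L=3$ and $h$ uses one positive direction. The restriction $a|_T$ is then an orientation-reversing isometry of a positive definite plane lattice, hence a reflection, and so it is involutive. The restriction $a|_{\tilde S_h}$, however, need not be involutive. The plan is to modify $a$ on $\tilde S_h$ while keeping $a|_T$.

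Key step: use the gluing description from Theorem~\ref{bicoset}. An isometry $(\alpha,\beta)\in O(\tilde S_h)\times O(T)$ extends to $\L$ exactly when $d(\alpha)$ and $d(\beta)$ agree under the anti-isometry $\psi\colon\discr\tilde S_h\to\discr T$. Set $\beta:=a|_T$. Because $\beta^2=\id$, it suffices to find an involution $\alpha\in\Aut_h(\tilde S_h)$ (preserving $S$ and $h$) with $d(\alpha)=\psi^{-1}d(\beta)\psi$. We already know that $\alpha_0:=a|_{\tilde S_h}$ satisfies this compatibility, so $d(\alpha_0)$ is an involution on the discriminant group.

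Here $\tilde S_h\subset S\oplus\Z h$ rationally, and $\alpha_0$ fixes $h$, so $\alpha_0$ is determined by an element of $O(S)=R(S)\rtimes\operatorname{Sym}(\Gamma)$ preserving the kernel $\K$. By \eqref{Sym0}, the image $d(\alpha_0)$ is realized by a graph symmetry $\gamma\in\operatorname{Sym}_0(\Gamma)$. Graph symmetries act on $S$ by permuting simple roots. Also, $d(\gamma)=d(\alpha_0)$ has order at most $2$. I would then adjust $\gamma$ by components acting trivially on $\discr S$ so that it becomes an involution of the Dynkin graph inducing the same discriminant map. Concretely, for each orbit of components one chooses a permutation of order at most $2$, using the $\mathbf{E}_8$ factors freely. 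Since $\gamma$ preserves $\K$ (it acts on discriminants like $\alpha_0$), it extends to $\tilde S_h$ by Theorem~\ref{L-K}. Gluing $\gamma$ with $\beta$ then gives an involutive skew-automorphism $\varphi$ of $\L$. It reverses the orientation of $T$, so by Theorem~\ref{thm:realization_general} the type is realized by a real model. Finally, a real component of $\mathcal M(S)$ corresponds, via Theorem~\ref{def.class}, to an oriented type isomorphic to its opposite, that is, to a symmetric type, which gives the second assertion.

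The main obstacle is the involution adjustment of $\gamma$. A discriminant involution coming from a graph automorphism need not lift to an order-$2$ graph automorphism; for example, a $3$-cycle of isomorphic components acting trivially on $\discr$ is harmless, but mixed cycles are not. I would handle this by noting that the induced map on each $\discr$ of a component determines the permutation of components with nontrivial discriminant up to order $2$ orbits. I would also check separately the components with trivial discriminant, namely $\mathbf{E}_8$, which can be fixed pointwise.
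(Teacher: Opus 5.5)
Your approach matches the paper's: restrict to $T$ to get a reflection $\beta$, use \eqref{Sym0} to realize the matching discriminant action on $\tilde S_h$ by a Dynkin graph symmetry, and glue. However, the step you single out as the main obstacle is not where the difficulty lies. By \eqref{Sym0}, $d$ is injective on $\operatorname{Sym}_0(\Gamma)$, since the only graph symmetries acting trivially on $\discr S$ permute $\mathbf{E}_8$ components. So any $\gamma\in\operatorname{Sym}_0(\Gamma)$ with $d(\gamma)^2=\id$ on $\discr S$ is already an involution. No orbit-by-orbit adjustment is needed, and no ``mixed cycle'' can act as an involution on $\discr S$ without being one.

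The genuine gap is the sentence ``$d(\gamma)=d(\alpha_0)$ has order at most $2$''. Gluing with $\beta$ only tells you that $\alpha_0$ induces an involution on $\discr\tilde S_h=\K^\perp/\K$. But $\gamma$ is determined by the action of $\alpha_0|_S$ on $\discr S$ (equivalently on $\discr S_h$, since $h$ is fixed). $\K^\perp/\K$ is only a subquotient, and symmetries preserving $\K$ need not act faithfully on it. For example, on the finite form $\langle-\frac12\rangle^{\oplus4}$ with $\K$ generated by $(1,1,1,1)$, a $4$-cycle of the summands has order $4$. Yet it induces a transposition of the three nonzero elements of $\K^\perp/\K\cong(\Z/2\Z)^2$. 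This is only an illustration at the level of finite forms.

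Part of this can be repaired. Replacing $a$ by $a^m$ with $m$ odd keeps it skew, keeps $a|_T=\beta$, and removes the odd part of the order of the graph component $\sigma_0$ of $\alpha_0$. If $\sigma_0$ still has order $2^k$ with $k\ge2$, something more is needed. Let $H\subset\operatorname{Sym}_0(\Gamma)$ be the group of graph symmetries preserving $\K$. You must show that the coset $\sigma_0\cdot\ker\bigl[H\to\Aut(\K^\perp/\K)\bigr]$ contains an element of order at most $2$. In the example above $(13)$ works, but that is a feature of that particular group, not a general fact.

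The paper's proof skips exactly this point as well. It asserts that $r\oplus r'$ ``extends to an involution'' without checking that $r'$ is an involution or that it preserves $\K$. So your argument is no less complete than the published one, but neither closes this step.
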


\begin{proof}
	For a  maximizing type the lattice $T=\tilde S_h^{\perp}$ is positive definite of rank~$2$.
	Every skew–autoisometry $r\in O(T)$ is a reflection, hence an involution.
	It acts on the discriminant group as an involution of
	$\operatorname{disc}T\cong -\,\operatorname{discr}\tilde S_h$.
	Via the identification~\eqref{Sym0}, this automorphism is realized by
	some $r'\in\operatorname{Sym}_0(\Gamma_S)\subset O(S)$.
	The direct sum $r\oplus r'$ extends to an involution of $\L$,
	yielding the required real structure.
\end{proof}

\subsubsection*{The case $\operatorname{rk}T=3$}

In this case, the orthogonal group $O(T)$ is infinite,
but its involutive elements admit a simple description.
Let $r\in O(T)$ be an involutive skew–autoisometry,
that is, $r$ reverses the orientation of the positive plane in $T\otimes\mathbb R$.
Decomposing $T$ into $\pm1$–-eigenspaces gives
$$
\dim T_{-r}\in\{1,3\},\qquad 
\dim T_{+r}\in\{0,2\},\qquad 
\dim T_{-r}+\dim T_{+r}=3.
$$
Skewness rules out $r=\pm\mathrm{id}$.
If $\dim T_{-r}=1$, then $r$ itself is a reflection;
if $\dim T_{+r}=1$, then $-r$ is a reflection.
Thus, up to sign, every involutive skew–autoisometry in rank three is a reflection.



Thus, when $\mathrm{rk}\,T=3$, the entire question reduces to the presence or absence
of reflections in $T$: the existence of an appropriate reflection
is both necessary and sufficient for the existence of a real model realizing the homological type.

\subsubsection*{The case $\operatorname{rk}T \ge 4$}

In this case, the orthogonal group $O(T)$ is much larger,
and involutive elements need not be reflections:
there exist skew involutions with eigenspaces of dimension at least two.
Nevertheless, in all known examples every symmetric homological type with
$\mathrm{rk}\,T\ge4$ admits an orientation–reversing involution that \emph{is}
a reflection on $T$ (or a product of two commuting reflections acting trivially on
the discriminant).
This empirical fact is compatible with the general pattern that, for indefinite lattices of signature
$(2,\mathrm{rk}\,T-2)$ and moderate discriminant,
reflections already generate enough of $O^{+}(T)$ to contain a skew involution..


In all ranks, the decisive step is the existence of reflections in the transcendental
lattice $T$, whose explicit construction and extension  will be developed in the following section.







\section{Real models via reflections}\label{section.reflection}

Real structures on $K3$-surfaces can be understood in terms of
certain symmetries of their underlying lattices.
Among these, reflections play a key role, providing the simplest
orientation–-reversing automorphisms compatible with a real form.
This section outlines how such reflections, combined with lattice
involutions, give rise to real projective models.

\subsection{Equivariant extensions and reflections leading to real models}

We now fix the setting that will serve as the basis for the construction
of real models via lattice extensions.
Throughout, let
$
\tilde{S}_{h}\subset \L
$
be a primitive polarizing sublattice extending the root lattice $S$, where $\L=2\mathbf{E}_{8}\oplus3\mathbf{U}$ is the
$K3$ lattice.  
The orthogonal complement
$$
T:=\tilde{S}_{h}^{\perp}\subset \L
$$
is the transcendental lattice associated with the homological type.
By Theorem~\ref{thm:realization_general},
the existence of a real projective model realizing this type
is equivalent to the existence of an involutive
skew–-autoisometry of $\L$ that preserves $\tilde{S}_{h}$ and the polarization $h$.

We start with an involution
$$
g\in O_{h}(\tilde{S}_{h})
$$
acting on the polarizing lattice.
Such an involution represents the potential real action on
the N\'{e}ron–-Severi part of a $K3$–-surface.
To produce a global skew–-autoisometry of $\L$, we must
construct a compatible involution on the transcendental part $T$
that reverses orientation.
The most natural candidate for such a map is a reflection
against a vector $a\in T$ of positive square:
\[
t_{a}(x):=x-\frac{2(x\cdot a)}{a^{2}}\,a,
\qquad a^{2}=k>0.
\]
satisfying $(2a/a^2)\in L^\vee$ (see \eqref{welldefinedt_a}) which implies 
$$
k|2\,\exp(\operatorname{discr}\tilde{S}_{h})
.$$
Then product $g\oplus t_{a}$ acts on $\tilde{S}_{h}\oplus\mathbb{Z}a$
and is automatically involutive.  
However, to extend $g\oplus t_{a}$ to an automorphism of the entire
lattice~$\L$, one must ensure that the induced action on the
discriminant form is compatible with the unimodular structure of $\L$.

\paragraph{Aim of the construction:}
The goal is to establish concrete lattice conditions under which
the map $g\oplus t_{a}$ acting on $ \tilde{S}_{h}\oplus\mathbb{Z}a$
extends to a global skew–-autoisometry of $\L$.
Equivalently, we seek criteria guaranteeing that
\[
g\oplus(-\mathrm{id})
\]
acts as a skew–-autoisometry on a suitable finite index
overlattice $(\tilde{S}_{h}\oplus\mathbb{Z}a)^{\sim}$
containing both $\tilde{S}_{h}$ and $\mathbb{Z}a$ as primitive sublattices.
In this situation, the reflection $t_{a}$ realizes the orientation–-reversing
part of the real structure, while $g$ acts on the polarizing part
preserving the configuration of root lattice associated to singularities.
The compatibility on the discriminant level ensures that the resulting
involution on $(\tilde{S}_{h}\oplus\mathbb{Z}a)^{\sim}$
extends to the whole $K3$--lattice $\L$.
This leads naturally to the following lattice–-theoretic criterion.
\begin{theorem}\label{thm:Aktas2019}
	Let $\tilde{S}_{h}$ be a lattice type that extends to a unique homological type
	up to the action of $O^{+}(\L)$.
	Assume that there exist a positive integer $k|2\,\exp(\operatorname{discr}\tilde{S}_{h})$,
	an involutive element $g\in O_{h}(\tilde{S}_{h})$,
	and a $(g\oplus -\mathrm{id})$–-equivariant finite index extension
	\[
	(\tilde{S}_{h}\oplus \mathbb{Z}a)^{\sim}\;\supset\; 
	\tilde{S}_{h}\oplus \mathbb{Z}a,
	\qquad a^{2}=k,
	\]
	such that:
	\begin{enumerate}
		\item both $\tilde{S}_{h}$ and $\mathbb{Z}a$ are primitive in $(\tilde{S}_{h}\oplus \mathbb{Z}a)^{\sim}$;
		\item the element $g\oplus (-\mathrm{id})$ induces $\pm\mathrm{id}$ on 
		$\operatorname{discr}(\tilde{S}_{h}\oplus \mathbb{Z}a)^{\sim}$; and
		\item there exists a primitive isometry
	$(\tilde{S}_{h}\oplus \mathbb{Z}a)^{\sim}\hookrightarrow 2\mathbf{E}_{8}\oplus 3\mathbf{U}.$
		
	\end{enumerate}
	Then the equisingular stratum $\mathcal{M}(\tilde{S}_{h})$ is real
	and admits a real representative.
\end{theorem}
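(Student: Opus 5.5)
We build, from the data $(k,g,(\tilde S_h\oplus\mathbb{Z}a)^\sim)$, an involutive skew-autoisometry of $\L$ that preserves $S$ and $h$ and reverses the orientation of $T=\tilde S_h^\perp$. Theorem~\ref{thm:realization_general} then produces the real model. Write $M:=(\tilde S_h\oplus\mathbb{Z}a)^\sim$ and $\theta:=g\oplus(-\mathrm{id})$. By hypothesis $\theta$ preserves $M$, so $\theta\in O(M)$; it is involutive because $g$ is.

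First, fix a primitive embedding $M\hookrightarrow\L$, which exists by condition~(3). Condition~(1) says $\tilde S_h$ is primitive in $M$, hence in $\L$. So this embedding restricts to a homological type extending $\tilde S_h$. Since that homological type is unique up to $O^+(\L)$, we may assume, after composing with an element of $O^+(\L)$ (which moves the orientation coherently), that it is the given oriented homological type. Then $a\in T$ with $a^2=k>0$, and $N:=M^\perp_\L$ is a sublattice of $T$ with $T\otimes\Q=\Q a\oplus N\otimes\Q$. We have $\sigma_+T=2$ and $a$ positive, so $\sigma_+N=1$.

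Second, extend $\theta$. Condition~(2) says $d(\theta)=\varepsilon\,\mathrm{id}$ on $\discr M$ with $\varepsilon=\pm1$. The primitive embedding $M\subset\L$ with complement $N$ is given by an anti-isometry $\psi\colon\discr M\to\discr N$, as in Theorem~\ref{bicoset}. The isometry $\varepsilon\,\mathrm{id}_N$ of $N$ induces $\varepsilon\,\mathrm{id}$ on $\discr N$, which is compatible with $d(\theta)$ under $\psi$. By Theorem~\ref{L-K}, $\Phi:=\theta\oplus\varepsilon\,\mathrm{id}_N$ preserves the kernel $\K\subset\discr M\oplus\discr N$, the graph of $\psi$, so it extends to an autoisometry of $\L$. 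It is involutive, it fixes $h$ because $g\in O_h(\tilde S_h)$, and it preserves $S$, since $S$ is recovered as the sublattice generated by the roots of $\tilde S_h$ orthogonal to $h$.

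Third, check skewness. On $T\otimes\R$, $\Phi$ acts as $-1$ on $a$ and as $\varepsilon$ on $N$. Take a positive definite $2$-plane spanned by $a$ and a positive vector of $N$. If $\varepsilon=+1$, $\Phi$ is $\mathrm{diag}(-1,1)$ on this plane and reverses its orientation. If $\varepsilon=-1$, $\Phi$ is $-\mathrm{id}$ on this plane, which preserves its orientation. In that case we replace $\varepsilon\,\mathrm{id}_N$ by $-\varepsilon\,\mathrm{id}_N$ composed with a discriminant-trivial reflection, or we switch to $-\Phi$ together with a sign change on $\tilde S_h$.

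\textbf{This sign bookkeeping is the main obstacle.} The condition ``$\pm\mathrm{id}$'' has to be matched with the positivity of $a$ and with how $\Phi$ acts on the positive direction of $\tilde S_h$, namely $h$, which must be fixed. I expect the intended reading is $\varepsilon=+1$: then $\Phi|_T=-t_a$ up to the $N$ part, where the relevant skew map is the reflection $t_a$ itself (acting as $-1$ on $a$ and $+1$ on $a^\perp$). I would settle the case $\varepsilon=-1$ by noting that $-\mathrm{id}_{\discr}$ on $\discr N$ is also induced by the isometry $-\mathrm{id}_N$, and then choosing the sign on $N$ so that exactly one positive direction of $T$ is negated.

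Finally, $\Phi$ is an involutive skew-autoisometry of $\L$ preserving $S$ and $h$ and reversing $\omega$. So the component is real, and by Theorem~\ref{thm:realization_general} it contains a real representative.
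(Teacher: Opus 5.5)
Your construction is the one the paper uses: embed $M=(\tilde S_h\oplus\mathbb{Z}a)^\sim$ primitively in $\L$, extend $\theta=g\oplus(-\mathrm{id})$ across $N=M^\perp_{\L}$ by $\varepsilon\,\mathrm{id}_N$, and read off skewness on $T\otimes\mathbb{R}=\mathbb{R}a\oplus N\otimes\mathbb{R}$. For $\varepsilon=+1$ your argument is complete: $\Phi|_T=t_a$ negates $a$ and fixes the positive line of $N$.

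\textbf{The gap is the case $\varepsilon=-1$.} None of your three repairs works there. Any extension $\Phi$ of $\theta$ preserves $N$. The gluing anti-isometry forces its restriction $\eta=\Phi|_N$ to induce $-\mathrm{id}$ on $\discr N$. Since $a$ is already negated, $\Phi$ is skew exactly when $\eta$ \emph{preserves} the positive direction of $N$. Your options fail as follows.
\begin{enumerate}
\item $\eta=-\mathrm{id}_N$ negates that direction as well, so $\Phi=-\mathrm{id}$ on the positive plane.
\item $\mathrm{id}_N$ composed with a discriminant-trivial reflection induces $+\mathrm{id}$ on $\discr N$. This does not glue with $d(\theta)=-\mathrm{id}$ unless the discriminant is $2$-elementary, and in that case $\varepsilon=+1$ anyway.
\item $-\Phi$ is the identity on $T$ and sends $h$ to $-h$. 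Flipping the sign back on $\tilde S_h$ alone changes the discriminant action on that side and destroys the gluing.
\end{enumerate}
What you would actually need is $\eta=-\rho$, where $\rho$ is an involution of $N$ acting trivially on $\discr N$ and reversing its positive line. For $\operatorname{rk}N=2$, as in the $\mu=18$ applications, this essentially means an integral reflection $t_b$ with $b\in N$, $b^2>0$, $d(t_b)=\mathrm{id}$. Hypotheses (1)--(3) constrain only $M$ and $g$, so they supply no such element.

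You guessed correctly that $\varepsilon=+1$ is the intended reading. The paper's own proof has exactly the same hole: it asserts that the extension ``flips the $a$-direction and preserves its orthogonal line''. That holds only for the extension by $+\mathrm{id}_N$, i.e.\ for $\varepsilon=+1$. So both arguments prove the theorem with $+\mathrm{id}$ in condition (2).

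The $-\mathrm{id}$ alternative becomes provable only when paired with the opposite sign on $a$. Suppose $g\oplus\mathrm{id}_{\mathbb{Z}a}$ preserves $M$ and induces $-\mathrm{id}$ on $\discr M$. Extend it by $-\mathrm{id}_N$. The result restricts to $-t_a$ on $T$, which fixes $a$ and negates the positive line of $N$, hence is skew. This is the ``$-t_a$'' half of the rank-three dichotomy in \S\ref{ref.vs.rkT}.

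So do not try to rescue $\varepsilon=-1$ with $g\oplus(-\mathrm{id})$. Either restrict to $\varepsilon=+1$, or reformulate condition (2) in this paired form.
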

\begin{proof}
Let $\widetilde{S}_{h}\subset L$ be a lattice type that extends to a unique
homological type up to $O^{+}(L)$.
Assume that there exist an involution $g\in O_{h}(\widetilde{S}_{h})$,
a vector $a\in T:=\widetilde{S}_{h}^{\perp}$ with $a^{2}=k>0$ and
$k\mid 2\,\exp(\disc\widetilde{S}_{h})$, and a finite index overlattice
$$
(\tilde{S}_{h}\oplus\mathbb{Z}a)^{\sim}\supset
\tilde{S}_{h}\oplus\mathbb{Z}a
$$
which is $(g\oplus -\id)$–-equivariant, with both
$\tilde{S}_{h}$ and $\mathbb{Z}a$ primitive in $(\tilde{S}_{h}\oplus\mathbb{Z}a)^{\sim}$,
and such that $g\oplus(-\id)$ induces $\pm\id$ on
$\discr\big((\tilde{S}_{h}\oplus\mathbb{Z}a)^{\sim}\big)$.
Finally, suppose there is a primitive embedding
$$
\iota:\;(\tilde{S}_{h}\oplus\mathbb{Z}a)^{\sim}\hookrightarrow L=2\mathbf{E}_{8}\oplus3\mathbf{U}.
$$

By construction, the involution
$$
\psi:=g\oplus(-\id)\in O\big((\tilde{S}_{h}\oplus\mathbb{Z}a)^{\sim}\big)
$$
acts as $\pm\id$ on the discriminant group
$\discr\big((\tilde{S}_{h}\oplus\mathbb{Z}a)^{\sim}\big)$.
Since $L$ is even unimodular and $\iota$ is primitive, Nikulin’s
extension criterion (see Tbeorem \ref{L-K}) applies: an isometry of a primitive
sublattice extends to $L$ if and only if it induces the identity on the
discriminant form of that sublattice (equivalently, acts as $\pm\id$ in the present
involutive situation).
Hence $\psi$ extends to an involution
$
\Phi\in O(L)
$
satisfying $\Phi\!\mid_{(\tilde{S}_{h}\oplus\mathbb{Z}a)^{\sim}}=\psi$.

On the polarizing part $\tilde{S}_{h}$ we have $\Phi\!\mid_{\tilde{S}_{h}}=g$,
and on the rank–one positive line $\mathbb{Z}a\subset T$ we have
$\Phi\!\mid_{\mathbb{Z}a}=-\id$.
Thus $\Phi$ reverses the orientation of the positive $2$–-plane in
$T\otimes\mathbb{R}$ (it flips the $a$–-direction and preserves its orthogonal line),
while preserving $\tilde{S}_{h}$ and the polarization $h$.
In particular, $\Phi$ is an involutive skew-–autoisometry of $\L$ relative to the
decomposition $\L=\tilde{S}_{h}\oplus T$, i.e.\ it preserves the lattice, fixes $h$,
and reverses the positive sign structure on $T$.

By the general Theorem~\ref{thm:realization_general},
the existence of an involutive skew–autoisometry of $\L$ that preserves
$\tilde{S}_{h}$ and $h$ is equivalent to the existence of a real projective
model realizing the oriented homological type extending $\tilde{S}_{h}$.
Therefore the equisingular stratum $\mathcal{M}(\tilde{S}_{h})$ is real and contains
a real representative.

\end{proof}

If $\mu(\tilde S_h) = 18$, \ie, $\operatorname{rk}T=3$ and any skew-autoisormetry of $T$ is a $\pm$reflection, then  the hypotheses Theorem~\ref{thm:Aktas2019} are also necessary, hence we have the following corollary.
\begin{corollary}\label{cor:mu18}
	Assume $\mu(\tilde S_h) = 18$ and that
	$\widetilde{S}_{h}$ extends to a unique homological type up to $O^{+}(\L)$.
	Then $\widetilde{S}_{h}$ extends to an abstract homological type admitting an
	involutive skew–-autoisometry of $\L$ if and only if there exist
	an involution $g\in O_{h}(\tilde{S}_{h})$ and a vector $a\in T$ with $a^{2}>0$
	satisfying the assumptions of Theorem~\ref{thm:Aktas2019}.
\end{corollary}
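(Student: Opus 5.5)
The "if" direction is Theorem~\ref{thm:Aktas2019} applied verbatim, so only the converse needs an argument. We start from an involutive skew-autoisometry $\Phi$ of $\L$ that preserves $\tilde S_h$ and $h$ and reverses the positive sign structure on $T=\tilde S_h^\perp$. From $\Phi$ we will read off the data $g$, $a$ and the overlattice required by Theorem~\ref{thm:Aktas2019}.

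First put $g:=\Phi|_{\tilde S_h}\in O_h(\tilde S_h)$; it is an involution. Next, since $\mu=18$ we have $\operatorname{rk}T=3$. By the discussion in \S\ref{ref.vs.rkT}, $r:=\Phi|_T$ is then a skew involution with either $\dim T_{-r}=1$ (so $r$ is a reflection) or $\dim T_{+r}=1$ (so $-r$ is a reflection). The eigenspace of dimension one is a primitive line $\mathbb{Z}a\subset T$, and we take $a$ to be its primitive generator.

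We must check that $a^2>0$. The plane orthogonal to $a$ carries the complementary eigenvalue and has signature $(1,1)$ or $(2,0)$. In the case $T_{-r}=\mathbb{Z}a$, skewness requires $r$ to reverse the orientation of a positive $2$-plane, and a reflection does so exactly when the reflecting vector is positive; hence $a^2>0$. In the case $T_{+r}=\mathbb{Z}a$, replacing $\Phi$ by $-\Phi$ changes $g$ to $-g$, which is not allowed since $h$ must be fixed. Instead we note that $T_{-r}$ is then a rank-$2$ sublattice on which $r=-\mathrm{id}$, and skewness forces $T_{-r}$ to have signature $(1,1)$. So $a^2<0$ in this case, which does not fit the form required by Theorem~\ref{thm:Aktas2019}, and this case must be excluded or converted.

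This is the main obstacle. My first attempt would be to use the symmetry of the situation: the proof of Theorem~\ref{thm:Aktas2019} only uses that $g\oplus(-\mathrm{id})$ acts by $\pm\mathrm{id}$ on the discriminant. I would therefore reinterpret the statement so that "a vector $a$" is the generator of the one-dimensional eigenline of $\pm\Phi|_T$, taking whichever sign gives a positive generator. The relevant eigenspace is then $T_{-(-r)}$, on which $-r$ acts as $-\mathrm{id}$; if this eigenline is positive, $-r$ is the reflection in a positive vector. It should also be checked whether the paper's standing conventions rule out the negative case.

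Finally, let $M:=(\tilde S_h\oplus\mathbb{Z}a)^{\sim}$ be the primitive hull of $\tilde S_h\oplus\mathbb{Z}a$ in $\L$. It is $\Phi$-invariant, $\Phi|_M=g\oplus(-\mathrm{id})$, and condition (3) holds by construction. Condition (1) holds because $\tilde S_h$ is primitive in $\L$ and $\mathbb{Z}a=T\cap(\mathbb{Q}a)$ is primitive in $T$, hence in $\L$.

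For condition (2), the orthogonal complement $M^\perp$ is the rank-$2$ eigenlattice of $r$, on which $\Phi$ acts as a scalar $\varepsilon=\pm1$. By Theorem~\ref{bicoset} there is an anti-isometry $\discr M\cong\discr M^\perp$ that intertwines $\Phi$, so $\Phi$ acts on $\discr M$ by the same scalar $\varepsilon$, that is, by $\pm\mathrm{id}$.

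The divisibility $k=a^2\mid 2\exp(\discr\tilde S_h)$ follows from the requirement that $t_a$ be well defined (see \eqref{welldefinedt_a}). Concretely, $\Phi$ is integral on $\L$, and $\Phi(x)=x-2(x\cdot a)a/a^2$ on $T$; extending to $\L$ through the gluing with $\discr\tilde S_h$ gives $2a/a^2\in T^\vee$, whose denominators divide $\exp\discr T=\exp\discr\tilde S_h$.

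The uniqueness hypothesis on the homological type is used only to identify the abstract homological type with the one carrying $\Phi$.
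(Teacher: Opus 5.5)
Your treatment of the case $\Phi|_T=t_a$ is correct. It is exactly the argument the paper compresses into the one-line remark before the corollary: since $\operatorname{rk}T=3$, every skew involution of $T$ is $\pm$ a reflection, so the hypotheses of Theorem~\ref{thm:Aktas2019} are necessary. Your verification of (1)--(3) through the eigenlattice $M^\perp=T_{+r}$ and the $\Phi$-equivariant gluing $\discr M\cong-\discr M^\perp$ supplies details the paper omits. So does your derivation of $a^2\mid 2\exp(\discr\tilde S_h)$ from $2a/a^2\in T^\vee$ and the primitivity of $a$.

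The gap is the other case, $\Phi|_T=-t_a$. You leave it unresolved, and your diagnosis of it is wrong. Skewness means the $(-1)$-eigenspace of $r$ contains exactly one positive direction. If $T_{-r}$ is a plane, it therefore has signature $(1,1)$. Since $T$ has signature $(2,1)$, the line $T_{+r}=\Z a$ is then positive, so $a^2>0$ in both cases. The real obstacle is different. Here $\Phi(a)=a$ and $\Phi=-\id$ on $M^\perp=a^\perp\cap T=T_{-r}$. Hence $\Phi$ restricts to $g\oplus\id_{\Z a}$ on $\tilde S_h\oplus\Z a$ and induces $-\id$ on $\discr M$. Your closing claim $\Phi|_M=g\oplus(-\id)$ holds only in the first case.

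None of the devices you suggest disposes of this case:
\begin{itemize}
\item $-\Phi$ sends $h$ to $-h$, so it is not allowed.
\item $g\oplus(-\id)$ preserves $M$ only if the projection of the gluing group $M/(\tilde S_h\oplus\Z a)$ to $\discr\Z a$ is $2$-torsion.
\item A positive vector $b\in T_{-r}$ does give $g\oplus(-\id)$ on $\tilde S_h\oplus\Z b$. But then $b^\perp\cap T$ is not an eigenlattice ($\Phi$ fixes $a$ and negates a negative vector), so hypothesis~(2) is no longer automatic.
\end{itemize}
Nothing in the setup excludes this case either.

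What closes the argument is to let the sign on $\Z a$ vary together with the sign in hypothesis~(2). The data read off from $\Phi$ are $g\oplus\epsilon\,\id_{\Z a}$ with $\epsilon=\pm1$, preserving $M$ and inducing $-\epsilon\,\id$ on $\discr M$. Here $\epsilon=-1$ is your case and $\epsilon=+1$ is the case $\Phi|_T=-t_a$. Conversely, extending such data by $-\epsilon\,\id$ on $M^\perp$ gives an involution with $\Phi|_T=-\epsilon\,t_a$. This is skew for either sign, because $a^2>0$ and $M^\perp$ has signature $(1,1)$.

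This sign pairing is what the ``$\pm$'' in the paper's one-line justification tacitly relies on. Note that if $g\oplus(-\id)$ is held fixed, an induced $-\id$ on $\discr M$ would give the canonical extension $-\id_T$, which is not skew.
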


\begin{remark}\label{rem:geometricmeaning}
	Theorem~\ref{thm:Aktas2019} gives a purely lattice–theoretic
	criterion guaranteeing the existence of a real representative.
	Geometrically, the vector $a$ plays the role of a ``real direction''
	in the transcendental lattice, while the extension
	$(\tilde{S}_{h}\oplus\mathbb{Z}a)^{\sim}$ encodes the
	period sublattice invariant under complex conjugation.
	Thus the theorem provides a constructive bridge between
	the arithmetic of lattice extensions and the geometry of
	real $K3$–-surfaces.
	In particular, the conditions
	on $g$ and the discriminant action ensure that the reflection
	defined by $a$ extends to a global skew–autoisometry of $\L$,
	thereby producing a real model.
\end{remark}

\section{The algorithm to detect real representatives}\label{section.algorithm}

While the perturbation argument Proposition~\ref{real.pert} propagates real structures
along families of homological types, the lattice criterion of Theorem~\ref{thm:Aktas2019}
provides a concrete and algebraically explicit tool
for verifying the existence of real projective models of $K3$–-surfaces.
These two approaches together provide a complete and computationally verifiable
procedure for determining the existence of real representatives
within the moduli space of all projective models of $K3$–-surfaces
with simple singularities.

In practice, the computation proceeds as follows:
\begin{enumerate}
	\item Identify the maximizing homological types that is already known to admit a real model (see Theorem~\ref{prop:rank2-general}).
	\item Obtain the remaining types by successive perturbations 
	of the maximizing ones (using Proposition~\ref{real.pert}).
	\item For exceptional cases not covered by perturbation, 
	construct explicit skew--automorphisms of~$T$ by reflections,
	and verify the extension conditions given by
	Theorem~\ref{thm:Aktas2019}. 
\end{enumerate}

\medskip


Fix a set of singularities $S$ and consider the corresponding $2d$-polarized lattice $S_h=S\oplus \Z$, $h^2=2d$. Typically the question whether the moduli space $\mathcal{M}:=\mathcal{M}(S)$ is nonempty depends on the polarized lattice $\tilde{S}_h$ only. According to Theorem \ref{def.class} and Definition \ref{lattice.type}, a set of singularities $S$ is realized by a simple quartic surface if and only if  a lattice type $\tilde{S}_h$ extending $S_h$ admits a primitive isometry $\tilde{S}_h\into \L$. Hence the general case is splitted into two subcases as finite index extensions $S_h\subset \tilde{S}_h$ as in Definition \ref{lattice.type} and primitive extensions $\tilde{S}_h\into \L$. Then, Theorem \ref{def.class} states that each lattice type $\tilde{S}_h$ gives rise to a number of nonempty connected strata $\mathcal{M}(S)$ which are in a bijection with the isomorphism classes of primitive isometries   $\tilde{S}_h \into \mathbf{L}$. Next, one can start to analyze whether a given real  component of the equsingular strata $\mathcal{M}(S)$ contains a real representative. 
Thus, once the connected components of $\mathcal{M}(S)$ identified, we investigate the existence of the real representatives within the real components of the strata separately for the maximizing case, \ie, $\mu(S)=19$ and non-maximizing case, \ie, $\mu(S)\le 18$:
\begin{itemize}
	\item If $\mu(\mathbf{S})=19$,  the lattice $T=\tilde{S}_h^\bot$ is a positive definite sublattice of rank $2$, and by Proposition\ref{prop:rank2-general} any real component of the  strata $\mathcal{M}(S)$ contains a real model.
	
	\item If $\mu(\mathbf{S})\le 18$, one proceeds by either
	\begin{enumerate}
		\item[(1)] perturbations, \ie,  constructing the desired type as a $c$-invariant perturbation of a maximizing one which is already known to be realized  by a real model, or 
		\item[(2)] reflections, \ie,  searching directly for an involutive skew-autoisometry  of $\L$ in the form of a  $\pm$reflection on the transcendental lattice $T=\tilde{S}_h^\bot$. 
	\end{enumerate}
In the former case, the desired homological type identified as a perturbation of a maximizing one, is real by  Proposition~\ref{real.pert}. If the homological type can not be recovered as a such a perturbation, by Theorem~\ref{thm:realization_general}, the problem is reduced to searching for an involutive  skew-autoisometry  of $\L$ and  one applies  the latter, where the Theorem~\ref{thm:Aktas2019} is used to construct a skew-autoisometry  of $\L$ via reflections on  $T$. Thus, for the non-maximizing case, $\mu(S)\le 18$, the algorithm to detect the real representatives of the real strata is given in Steps~$0$--$5$ below:
\end{itemize}
\noindent\textbf{Step~$0$: Perturbation reduction.}

 \begin{enumerate}
 	\item[(0.1)] Let Let $\tilde{S}_{h}$ be a lattice type extending to a unique homological type $ (h\in\tilde {S}_{h}\hookrightarrow \L)$  (i.e., admitting a unique
primitive embedding $\tilde{S}_{h} \into \L$).
Determine whether the homological type $ (h\in\tilde {S}_{h}\hookrightarrow \L)$ can be identified as a  $c$–-invariant perturbation of a maximizing real homological type. If such a perturbation exists, then by 	Proposition~\ref{real.pert}, the homological type is realized by a real model, \ie, the strata $\mathcal{M}(S)$ admits a real representative.

	\item[(0.2)] Otherwise, continue with the next step.
\end{enumerate}



\medskip
\noindent\textbf{Step~1: Enumerating real vectors $a$ in $T$.}
\begin{enumerate}
	\item[(1.1)] List all the vectos  $a\in T$ with $a^{2}=k>0$ such that
\begin{align*}
	k \;\mid\; 2\,\exp\!\big(\discr(\tilde{S}_{h})\big).
\end{align*}
	This requirement ensures that the square length $k=a^{2}$ of the chosen vector $a\in T$
	is compatible with the discriminant denominators of the polarizing lattice
	$\tilde{S}_{h}$.
	Equivalently, this condition guarantees that the fractional vector
	$a^{\vee}:=\tfrac{a}{a^{2}}\in T^{\vee}$ defines a class
	$a^{\vee}+T\in\discr(T)$ of order dividing $2\,\exp(\discr(\tilde{S}_{h}))$,
	so that one can glue $\widetilde{S}_{h}$ and $\mathbb{Z}a$ along a common
	isotropic subgroup of their discriminant groups.
	\item[(1.2)] Let $t_{a}\in O(T)$ be the reflection against  $a$; then $-t_{a}$ is orientation–-reversing on $T$.
	Compute the induced action $(t_{a})^{\sharp}$ on $\disc(T)$.
\end{enumerate}

\medskip
\noindent\textbf{Step~2: Building the finite index overlattice.}
\begin{enumerate}
	\item[(2.1)] For each vector $a$ as in Step~1, consider the lattice $\tilde{S}_{h}\oplus\mathbb{Z}a$ and its discriminant
	$\discr(\tilde{S}_{h}\oplus\mathbb{Z}a)\cong \discr(\tilde{S}_{h})\oplus \disc(\mathbb{Z}a)$.
	\item[(2.2)] Let $g\in O_{h}(\tilde{S}_{h})$ be an involution compatible with the equisingular configuration
		(\eg, preserving the root lattice spanned by exceptional classes, the polarization, etc.).
	\item[(2.3)] Choose an isotropic subgroup
	$\mathcal{K}\subset \discr(\tilde{S}_{h}\oplus\mathbb{Z}a)$ that is $(g\oplus -\id)$–-stable and projects injectively to each summand
	(this ensures that both $\widetilde{S}_{h}$ and $\mathbb{Z}a$ remain primitive in the overlattice).
	\item[(2.4)] By Theorem~\ref{L-K}, an overlattice $(\tilde{S}_{h}\oplus\mathbb{Z}a)^{\sim}$ is determined by $\mathcal{K}$.
	By construction, $g\oplus(-\id)$ acts on $(\tilde{S}_{h}\oplus\mathbb{Z}a)^{\sim}$.
\end{enumerate}

\medskip
\noindent\textbf{Step~3: Discriminant compatibility.}
\begin{enumerate}
	\item[(3.1)] Compute the induced automorphism
	$$
	(g\oplus -\id)^{\sharp}\;\in\;
	\Aut\!\big(\discr((\tilde{S}_{h}\oplus\mathbb{Z}a)^{\sim})\big).
	$$
	\item[(3.2)] Verify that $(g\oplus -\id)^{\sharp}=\pm\id$.
	(This is exactly hypothesis~(ii) of Theorem~\ref{thm:Aktas2019} and is the
	key extension condition.)
\end{enumerate}

\medskip
\noindent\textbf{Step~4: Primitive embedding into $L$.}
\begin{enumerate}
	\item[(4.1)] Check the existence of the primitive isometric embedding
	$$
	(\widetilde{S}_{h}\oplus\mathbb{Z}a)^{\sim}\hookrightarrow \L=2\mathbf{E}_{8}\oplus 3\mathbf{U}.
$$
	This is settled by  Nikulin's embedding criteria given in Theorem~\ref{th.N.existence} using the discriminant form $\discr(\widetilde{S}_{h}\oplus\mathbb{Z}a)^{\sim}=\mathcal{K}^{\bot}/\mathcal{K}$
	\item[(4.2)] By Nikulin's extension theorem (Theorem~\ref{L-K}), the isometry $g\oplus(-\id)$ of
	$(\widetilde{S}_{h}\oplus\mathbb{Z}a)^{\sim}$ extends to an involution
	$\Phi\in O(\L)$.
\end{enumerate}

\medskip
\noindent\textbf{Step~5: Conclude reality.}
\begin{enumerate}
	\item[(5.1)] The extension $\Phi$ preserves $\tilde{S}_{h}$ and $h$ and reverses the orientation on $T$,
	hence is an involutive skew–-autoisometry of $\L$.
	\item[(5.2)] By Theorem~\ref{thm:realization_general}, the equisingular stratum
	$\mathcal{M}(\widetilde{S}_{h})$ is real and contains a real representative.
\end{enumerate}


We have effectively implemented all algorithms described in this section in \texttt{GAP}~\cite{GAP} and obtained conclusive results verifying the existence or nonexistance  objects mentioned in the assumptions of the Theorem~\ref{thm:Aktas2019}.

If $\mu(\tilde S_h) = 18$, the hypotheses of Theorem 2.5 are also necessary; hence, one will obtain a definite answer. If exists, for the few cases with $\mu(\tilde S_h)\le17$, where the Theorem~\ref{thm:Aktas2019} does not answer the question in the affirmative, one can use  use the more sophisticated techniques (gluing four lattices) of Degtyarev \emph{et al.}~\cite{DIK2000}.
 
\section{Applications}\label{Applications}

The lattice theoretic framework established in the previous sections
enables an explicit and algorithmic verification of the existence of real representatives
for all projective models of $K3$–surfaces with simple singularities.
To demonstrate its effectiveness, we apply the procedure to the two
classical birational models corresponding to the lowest degrees of polarization:
$h^{2}=2$, \ie, simple sextic curves in the plane and $h^{2}=4$, \ie, simple quartic surfaces in $\mathbb{P}^{3}$.
For each of these polarizations, we present explicitly
how the algorithm 
confirms the (non)existence of real representatives of the corresponding
equisingular strata.

\subsection{Simple Quartics}\label{simple.quartics}
In this section we consider spatial quartic surfaces, which corresponds to the
birational projective models $f_h\colon X\rightarrow \mathbb{P}^3$ of $K3$–-surfaces with the polarization $h^{2}=4$. Quartic surfaces with only simple ($\mathbf{A}$--$\mathbf{D}$--$\mathbf{E}$) singularities form one of the most
classical and thoroughly studied families of $K3$–-surfaces.
Their deformation classification was completed by  G\"{u}ne\c{s} Akta\c{s}~\cite{Cisem1,aktacs2019real,Cisem2024}; a complete description of the strata $\mathcal{M}(S)$ of simple quartics is given by G\"{u}ne\c{s} Akta\c{s} ~\cite{Cisem2024}, where the list of  $390$ maximizing \rom($\mu=19$\rom) families, $39$ extremal families that are not perturbations of anything bigger and  $13$ non-maximizing families \rom($\mu\le 18$\rom) that are not real, can be found. In the smaller family $\mathcal{M}_1(S)\subset \mathcal{M(S)}$ constituted by the \emph{nonspecial} quartics, two examples of equisingular strata $\mathcal{M}_1(S)$ that are real but doesn't contain any real surfaces, namely
$\mathcal{M}_1(\mathbf{A}_7\oplus\mathbf{A}_6\oplus\mathbf{A}_3\oplus\mathbf{A}_2)$ and $\mathcal{M}_1(\mathbf{D}_7\oplus\mathbf{A}_6\oplus\mathbf{A}_3\oplus\mathbf{A}_2)$  is discovered in  \cite{aktacs2019real}. (We revisit these results from the viewpoint of real geometry.) As one of the main applications of this paper, we give the proof of Theorem~\ref{principal.result} extending the list of exceptional examples of non-special real strata without real representatives (given in~\cite{aktacs2019real}) to the whole space of quartics by introducing the newly discovered such real strata $\mathcal{M}(\mathbf{A}_7\oplus\mathbf{A}_5\oplus\mathbf{A}_3\oplus\mathbf{A}_2\oplus\mathbf{A}_1)$.

\begin{proof}[Proof of Theorem~\ref{principal.result}]

If $\mu(S)=19$, the statement of the theorem is given by Proposition \ref{prop:rank2-general}. Hence, throughout the rest of the proof we assume $\mu(S)\leq18$. Then, by Theorem~\ref{thm:realization_general} the question reduces to find an involutive skew--autoisometry of the abstract homological type $(h\in\tilde{S}_h\subset\L)$ extending the lattice type$\tilde{S}_h$ and the argument proceeds by combining the perturbation principle,
Proposition~\ref{real.pert} with the lattice--theoretic criterion
of Theorem~\ref{thm:Aktas2019}.

By computer aided computations, it is easily confirmed that most of the abstract homological types $(h\in\tilde{S}_h\subset\L)$ (except $122$ of them) with $\mu( S)\leq18$ are $\operatorname{c}$-invariant perturbations of the $390$ maximizing homological types listed in \cite{Cisem2024}) realized by a real quartic where $c$ is a real structure on the corresponding real surface. Then, due to Proposition \ref{real.pert}, these abstract homological types are also realized by a real quartic.

The equisingular strata $\mathcal{M}(S)$, with $S$ being one of the set of $13$ sets of singularities listed as nonreal families~\cite{Cisem2024}, consists of two complex conjugate components, \ie, nonreal. Therefore the strata $\mathcal{M}(S)$ do not contain a real surface.

For the remaining  $109$ non--maximizing real families
that are not reachable by a perturbation,
by  Theorem \ref{thm:realization_general}, the question reduces to finding an  involutive skew--autoisometry of~$\L$. To find such an involution we apply Theorem~\ref{thm:Aktas2019} by following the steps listed above. Implementing the algorithms given in Steps~0-5, we employ \texttt{GAP}~\cite{GAP} to find a suitable vector $a\in T$ with $a^{2}>0$, $a^2\mathrel| 2\operatorname{exp}(\disc \tilde{S}_h)$ and an involution $g\in O_{h}(\tilde{S}_{h})$  satisfying discriminant compatibility conditions of
Theorem~\ref{thm:Aktas2019}. Eventually,  in all but four families,
the assuptions of Theorem~\ref{thm:Aktas2019} is confirmed,
yielding an involutive skew--autoisometry of~$\L$
and hence a real representative.

The above algorithm fails for the following four set of singularities $$\mathbf{A}_7\oplus\mathbf{A}_6\oplus\mathbf{A}_3\oplus\mathbf{A}_2,\quad \mathbf{D}_7\oplus\mathbf{A}_6\oplus\mathbf{A}_3\oplus\mathbf{A}_2,\quad \mathbf{A}_7\oplus\mathbf{A}_5\oplus\mathbf{A}_3\oplus\mathbf{A}_2\oplus\mathbf{A}_1, 4\mathbf{D}_4,$$
\emph{i.e.}, computer aided calculations confirm that there does not exist a such a vector $a\in T$ and $g\in O_{h}(\tilde{S}_{h})$  satisfying the  assumptions of
Theorem~\ref{thm:Aktas2019}. For the first three sets of singularities listed above we have $\mu(\tilde{S}_{h})=18$, so by Corollay~\ref{cor:mu18}, the corresponding real strata does not contain any real quartic surfaces.

There remains one further stratum to consider, namely the type with
singularities $4\mathbf{D}_4$. Here we  recall that our strategy for detecting
real structures to search for an
involutive skew–autoisometry of $\L$ whose restriction to the transcendental
lattice $T=\tilde{S}_{h}^{\perp}$ is of the form $\pm t_{a}$ (a reflection
or negative reflection) againts a vector $a\in T$ with $a^{2}>0$.
This configuration does not admit a skew–-autoisometry of the form
$\pm t_{a}$ for any $a\in T$.
However, in this case the transcendental lattice is
$$
T = 2\mathbf{U}(2)\;\oplus\;[-4],
$$
which admits an involutive skew–-autoisometry reversing the positive
sign structure and acting trivially on the discriminant, see Proposition~\ref{real.lattice.condition}.
Hence this stratum does possess a real representative, even though the
Theorem~\ref{thm:Aktas2019} does not apply directly.

Thus we conclude that
every real equisingular stratum of simple quartic surfaces
$X\subset\mathbb{P}^{3}$, except for the three listed in the statement of the Theorem~\ref{principal.result},
contains a real representative.
\end{proof}
Although the implemented calculations by \texttt{GAP}~\cite{GAP} gives us the definite answer for the exceptional cases
\begin{align}
\mathbf{A}_7\oplus\mathbf{A}_6\oplus\mathbf{A}_3\oplus\mathbf{A}_2 \text{(nonspecial)},\label{ns1}\\ \mathbf{D}_7\oplus\mathbf{A}_6\oplus\mathbf{A}_3\oplus\mathbf{A}_2\text{(nonspecial)},\label{ns2}\\ \mathbf{A}_7\oplus\mathbf{A}_5\oplus\mathbf{A}_3\oplus\mathbf{A}_2\oplus\mathbf{A}_1\text{(special)},
\end{align} 
in the following subsection, we provide explicit details to demonstrate the calculations handled by \texttt{GAP}\cite{GAP} for $S=\mathbf{A}_7\oplus\mathbf{A}_5\oplus\mathbf{A}_3\oplus\mathbf{A}_2\oplus\mathbf{A}_1$.
The demonstration for the two nonspecial cases given in~\eqref{ns1} and ~\eqref{ns2}established in \cite{aktacs2019real}.

\subsection{Demonstration for the set of singularities $S=\mathbf{A}_7\oplus\mathbf{A}_5\oplus\mathbf{A}_3\oplus\mathbf{A}_2\oplus\mathbf{A}_1$}\label{S1}
 We demonstrate the calculations handled by \texttt{GAP}~\cite{GAP} for the set of singularity $S=\mathbf{A}_{15}\oplus\mathbf{A}_3$. Let $S_h=S\oplus\Z h$, where $h^2=4$.
Then one has
$$\discr S_h\cong\textstyle\langle-\frac{7}{8}\rangle\oplus\langle-\frac{5}{6}\rangle\oplus\langle-\frac{3}{4}\rangle\oplus\langle-\frac{2}{3}\rangle\oplus\langle-\frac{1}{2}\rangle\oplus\langle\frac{1}{4}\rangle \cong (\Z /8\Z)\oplus (\Z /4\Z)^2\oplus (\Z /2\Z)^2\oplus(\Z /3\Z)^2.$$

We determine all isotropic subgroups $\K\subset \discr S_h$ such that the corresponding finite index extension $\tilde{S}_h$ satisfies the conditions in Definition \ref{lattice.type}, \ie, $\tilde{S}_h$ is a lattice type extending $S_h$. Up to action of $O(S)$, we have six such isotropic subgroups $\K$ (\ie, six isomorphism classes of lattice types $\tilde{S}_h$), which are given
in the \autoref{table:kernels} using the coordinate vector notation.

\begin{table}
	\centering
	\caption{The isotropic subgroups $\K_i$}\label{table:kernels}
	\begin{tabular}{l l l } 
		\hline \addlinespace[0.11cm]
		& Generators &  \\ [0.5ex] 
		\hline\addlinespace[0.11cm]
		$\K_1$ & $[ 4, 3, 0, 0, 1, 0 ]$ & cyclic of order $2$  \\ 
		$\K_2$ & $[ 0, 3, 2, 0, 1, 2 ]$ & cyclic of order $2$ \\
		$\K_3$& $[ 4, 3, 0, 0, 1, 0 ], [ 4, 0, 2, 0, 0, 2 ]$  &  of order $4$ \\ 
		$\K_4$ & $[ 6, 0, 1, 0, 0, 3 ]$ & cyclic of order $4$  \\ 
		$\K_5$ & $[ 6, 3, 1, 0, 1, 1 ]$ & cyclic of order $4$ \\
		$\K_6$& $[ 6, 0, 1, 0, 0, 3 ], [ 4, 3, 0, 0, 1, 0 ]$  &  of order $8$ \\ [1ex] 
		\hline
	\end{tabular}
\end{table}

As the lattice type corresponding to the isotropic subgroup $\K_2$ is the one giving rise to the real component of the equisingular strata (\ie, extending to a unique homological type) which does not admit a real quartic,  from now on,  we fix the lattice type $\tilde{S}_h$ as the one associated to $\K=\K_2$. Then $\discr \tilde{S}_h$, which is given by $\K^\bot/\K$, is as follows:
$$\discr \tilde S_h\textstyle \cong (\Z /8\Z)\oplus (\Z /4\Z)\oplus (\Z /4\Z)\oplus(\Z /3\Z)\oplus(\Z /3\Z).$$ 

Consider an integer $a^2\mathrel| 2\operatorname{exp} (\discr \tilde{S}_h)= 2^4\cdot3^2$ as in Step~1 in \S\ref{section.algorithm}, \emph{i.e.}, $a^2= 2^N\cdot3^r$, where $N\in\{1,2,3,4\}$ and $r\in\{0,1\}$. 

Let the cyclic group $\discr \Z a$ be generated by $\alpha:= a/a^2$ with $\alpha^2=\frac{1}{a^2}$.  With $a^2$ fixed, we are interested in  the isotropic subgroups
$$\K\subset \discr (\tilde S_h\oplus \Z a)  \cong (\Z /8\Z)\oplus (\Z /4\Z)\oplus (\Z /4\Z)\oplus (\Z /4\Z)\oplus(\Z /3\Z)\oplus(\Z /3\Z) $$
which are the cyclic subgroups $\langle \vartheta \rangle$ generated by $\vartheta=\kappa \oplus n\alpha$ such that $\operatorname{order}(\vartheta)= a^2/n$,  $n=1$ or $2$, $\kappa\in\discr \tilde S_h$, as in Lemma~5.2 in \cite{aktacs2019real}. Note that $\vartheta \cdot \alpha = n/a^2$.

With $a^2$ fixed, let the group $\discr(\tilde{S}_h\oplus \mathbb{Z}a)$ be the orthogonal sum of cyclic groups with generators $\alpha_i$, where $\alpha_0$ is the generator of $\discr \mathbb{Z}a$. The action of $O_{h}(\tilde{S}_h)\times\{\pm \id_a\}$ on $\discr (\tilde{S}_h\oplus\mathbb{Z}a)$ is generated by involutions $\alpha_i\mapsto\pm\alpha_i$.

\subsubsection{The case $N=1$}\label{subN1}
Then $n=2$, as $\discr \tilde{S}_h$ does not contain any cyclic direct summand of order $2$. Hence, we have $\ell_2(\K^{\bot}/\K)= 4$ , which implies by Theorem \ref{th.N.existence} that a primitive isometric embedding
$(\tilde{S}_{h}\oplus\mathbb{Z}a)^{\sim}\hookrightarrow \L=2\mathbf{E}_{8}\oplus 3\mathbf{U}$
 does not exist, ruling out this case by Corollary~\ref{cor:mu18} .

\subsubsection{The case $N=2$ and $n=2$}\label{subN2n2}
Then we have $\ell_2(\K^{\bot}/\K)= 4$ and hence this case ruled out as in section~\ref{subN1} .

\subsubsection{The case $N=2$ and $n=1$}\label{N2n1}
For an odd prime $p$, the $p$--primary part of the group $\langle\kappa\rangle$ is an orthogonal summand in the cyclic group $\discr_p \tilde{S_h}$, hence, in this particular case, we have either  that all the $p$--primary components of the vector $\kappa$ is zero, which implies $ p\nmid a^2$, or 
$\discr_p\tilde{S}_h$ is generated by these $p$--primary components of $\kappa$. Since 
$\discr \tilde{S}_h$ does not contain any cyclic direct summand of order $9$, we rule out the case $a^2=4\cdot3\cdot3$ and obtain $a^2=4\delta$ where $\delta=1$ or $3$.

Let $a^2=4$. To find the options for the $2$--primary part $\K_{[2]}$ of $\K$, we try to list  $v=\vartheta_{[2]} \in \discr_2 (\tilde{S}_h\oplus \Z a)$ such that
$$
\text{$\operatorname{order }(v)=4$, $\;v\cdot\alpha =\frac{1}{4\delta}$ and $v^2=0$}.
$$
If $\delta=1$, using the coordinate vector notation for the generator $v$ of the cyclic subgroup $\K_{[2]}\subset (\Z /4\Z)\oplus (\Z /4\Z)\oplus (\Z /4\Z)\oplus (\Z /8\Z)$, we obtain that $v$ is of the form:
$$
\text{$[2,0,3,\pm 2]$ or $[2,2,1,\pm 2]$ or $[3,2,2,\pm 2]$ or $[1,0,2,\pm 2]$.}
$$
For all these cases, we have  $\discr_2(\tilde{S}_{h}\oplus\mathbb{Z}a)^{\sim}=\K_{[2]}^{\bot}/\K_{[2]}\cong (\Z /8\Z)\oplus (\Z /4\Z)$ and there exist a vector in $\K_{[2]}^{\bot}/\K_{[2]}$ of order $4$ which is reversed by the map  $g\oplus (-\mathrm{id})$ with $g\in O(\tilde S_h)$.  Thus,  $g\oplus (-\mathrm{id})$ does not induce $\pm\mathrm{id}$ on $\discr_2(\tilde{S}_{h}\oplus\mathbb{Z}a)^{\sim}$, eliminating this case  by  Corollary~\ref{cor:mu18}.

\noindent If $\delta=3$, we obtain that $v$ is of the form:
$$
\text{$[0,0,1,0]$ or $[0,0,1,4]$ or $[0,2,1,0]$ or $[0,2,1,4]$,}
$$
where for all the cases we have   $\discr_2(\tilde{S}_{h}\oplus\mathbb{Z}a)^{\sim}=\K_{[2]}^{\bot}/\K_{[2]}\cong (\Z /8\Z)\oplus (\Z /4\Z)$ and there exist a vector in $\K_{[2]}^{\bot}/\K_{[2]}$ of order $4$ which is reversed by   $g\oplus (-\mathrm{id})$, eliminating this case  as in the case $\delta=1$.  
\subsubsection{The case $N=3$ and $n=1$}\label{N3n1(2)}
As in section~\ref{N2n1}, we have $a^2=8\delta$,  where $\delta=1$ or $3$. We search for  $v=\vartheta_{[2]} \in \discr_2 (\tilde{S}_h\oplus \Z a)$ such that
$$
\text{$\operatorname{order }(v)=8$, $\;v\cdot\alpha =\frac{1}{8\delta}$ and $v^2=0$}.
$$
\par If $\delta=1$, then for all such vectors $v$, by Theorem~\ref{th.N.existence}, a primitive isometric embedding
$(\tilde{S}_{h}\oplus\mathbb{Z}a)^{\sim}\hookrightarrow \L=2\mathbf{E}_{8}\oplus 3\mathbf{U}$
does not exist, ruling out this case by Corollary~\ref{cor:mu18} .

If $\delta=3$, we obtain that $v$ is of the form:
$$
\text{$[3,0,1,\pm 1]$ or $[3,2,1,\pm 3]$ or $[1,0,5,\pm 3]$ or $[2,2,5,\pm 1]$,}
$$
where for all the cases we have   $\discr_2(\tilde{S}_{h}\oplus\mathbb{Z}a)^{\sim}=\K_{[2]}^{\bot}/\K_{[2]}\cong (\Z /4\Z)\oplus (\Z /4\Z)$ and there exist a vector in $\K_{[2]}^{\bot}/\K_{[2]}$ of order $4$ which is reversed by   $g\oplus (-\mathrm{id})$, ruling out this case by Corollary~\ref{cor:mu18}

\subsubsection{The case $N=3$ and $n=2$}
By section~\ref{N3n1(2)}, we have $a^2=8\delta$,  where $\delta=1$ or $3$. Then, we list all $v\in \discr_2 (\tilde{S}_h\oplus \Z a)$ such that

$$\text{$\operatorname{order }(v)=4$, $\;v\cdot\alpha =\frac{1}{4\delta}$ and $v^2=0$}.$$
For all such vectors $v$, by Theorem \ref{th.N.existence}, there is no primitive isometric embedding
$(\tilde{S}_{h}\oplus\mathbb{Z}a)^{\sim}\hookrightarrow \L=2\mathbf{E}_{8}\oplus 3\mathbf{U}$, ruling out this case by Corollary \ref{cor:mu18} .
\subsubsection{The case $N=4$}\label{sub4}
Then $n=2$, since $\discr_2 \tilde{S}_h$ does not contain any cyclic direct summand of order $16$. On the other hand any order $8$ element in $\discr_2\tilde{S}_h$ is an orthogonal direct summand which contradicts to $n=2$.

\subsection{Simple Sextics}
In this section we consider hyperelliptic projective models $f_h\colon X\rightarrow \mathbb{P}^2$ with $h^2=2$, which corresponds to  plane sextic curves.
Simple sextics form one of the most classical families of $K3$-models, and
their deformation theory has been studied extensively.  A complete description
of the equisingular strata $\mathcal{M}(S)$ of simple sextic
curves is given in~\cite{Alex2}, where all strata of simple sextics with root
lattice $S$ are enumerated and their geometry is analyzed.
For simple sextics, the search for real representatives proceeds entirely parallel to that of quartic surfaces.
The perturbation principle (Proposition~\ref{real.pert}) ensures that realness persists under $c$–invariant degenerations, while Theorem~\ref{thm:Aktas2019} provides the same lattice-theoretic criterion for constructing real models.
Thus maximizing real types yield real sextics, and their $c$–invariant perturbations remain real as well. Among the nonspecial sextics, one particularly interesting example arises:
Degtyarev and Akyol proved that the stratum
$\mathcal{M}(\mathbf{A}_{7}\oplus\mathbf{A}_{6}\oplus\mathbf{A}_{5})$
is connected and therefore real, but nonetheless contains no real sextic
curves (see Proposition~2.6 in~\cite{Alex2}).
This phenomenon is directly analogous to the exceptional real strata  in the quartic case identified in
Theorem~\ref{principal.result}.  In both settings, the equisingular stratum
is real as a complex variety but fails to contain real representatives. The lattice–-theoretic approach developed here provides a shorter
and conceptually clearer proof of the sextic example, see~\cite{aktacs2019real} for details.

Thus, the sextic case illustrates that the exceptional behavior observed for
certain quartic types is rear but not isolated: real strata without real
representatives already appear in the simplest projective models of $K3$--surfaces.  At the same time, the general algorithm of
Section~\ref{section.algorithm} successfully detects these cases and
confirms the existence of real representatives for all remaining strata.


\subsection*{Concluding remarks}

The analysis carried out in this paper establishes a complete and effective
lattice–theoretic framework for determining the (non)existence of real
representatives in the equisingular strata of projective models of $K3$–surfaces
with simple singularities. The interaction between maximizing real types, their $c$–-invariant perturbations,
and the possibility of extending a reflection in the transcendental lattice to
a global skew–autoisometry of $\L$, gives us a unified and computationally
verifiable procedure that applies uniformly to all polarizations.  In the
quartic case, this framework leads to a full classification of real strata and
identifies precisely three real types that admits no real representatives, extending and
completing the earlier results of \cite{aktacs2019real}.  The sextic example
demonstrates that this phenomenon is intrinsic to the geometry of $K3$--surfaces and appears already in the lowest nontrivial polarization.  
Altogether, these results show that the interplay between lattice theory and
real geometry provides a powerful and flexible approach to the study of
equisingular strata, capable of resolving long-standing classification
problems and illuminating the structure of real algebraic surfaces in a broad
range of settings.

\bibliographystyle{amsplain}
\bibliography{mybibliography}

\end{document}